\documentclass[11pt,reqno]{amsart}

\usepackage[T1]{fontenc}
\usepackage{lmodern}
\usepackage{microtype}
\usepackage{amsmath,amssymb,mathtools,mathrsfs}
\usepackage{hyperref}
\usepackage{graphicx}
\usepackage{orcidlink}
\hypersetup{
  colorlinks=true,
  linkcolor=blue,
  citecolor=blue,
  urlcolor=blue
}

\newtheorem{theorem}{Theorem}[section]
\newtheorem{proposition}[theorem]{Proposition}
\newtheorem{lemma}[theorem]{Lemma}

\theoremstyle{definition}

\theoremstyle{remark}

\newcommand{\C}{\mathbb C}
\newcommand{\R}{\mathbb R}
\newcommand{\M}{\mathcal M}
\newcommand{\MK}{\mathfrak K}
\newcommand{\KM}{\MK_{\M}}

\newcommand{\dist}{\operatorname{dist}}
\newcommand{\pr}{\pi}
\newcommand{\bx}{\mathbf{x}}

\newcommand{\LimSup}{\operatorname*{\overline{lim}}}
\newcommand{\LimInf}{\operatorname*{\underline{lim}}}

\newcommand{\g}{\mathbf{g}}
\title[Filled Julia Set Bundle over the Mandelbrot Set]{Local Connectivity of the Filled Julia Set Bundle over the Mandelbrot Set}
\author[Yueyang Wang]{Yueyang Wang\,\orcidlink{0009-0007-5661-8896}}
\address{School of Mathematics, Shanghai University of Finance and Economics, Shanghai, 200433, China}
\email{yueyangwang@icloud.com}
\date{}
\subjclass[2020]{Primary 37F10; Secondary 37F20, 37F35, 37F46, 54D05}
\keywords{Quadratic polynomial, filled Julia set, Mandelbrot set, parabolic implosion, holomorphic motion, local connectedness, Yoccoz puzzle, parapuzzle}

\begin{document}

\begin{abstract}
For the quadratic family $f_c(z)=z^2+c$, we study the non-escaping locus
$\KM=\{(c,z):c\in\M,\ z\in K_c\}$, the model space for cubic
capture straightening in the framework of Inou and Kiwi. Here $\M$ is
the Mandelbrot set, $K_c$ is the filled Julia set of $f_c$, and
$J_c=\partial K_c$.
We prove that $\partial\KM$ has full Hausdorff dimension $4$.
The set $\KM$ is locally connected at every $(c,z)$ with hyperbolic
$c\in\M$. At parameters on $\partial\M$, we construct mixed puzzle
pieces from Yoccoz puzzles and parapuzzles. For non-degenerate pairs,
shrinking of both planar puzzles implies local connectedness of
$\KM$ at $(c,z)$. However, local connectedness fails at some
$(c,z)$ with $c$ real parabolic and $z\in K_c\setminus J_c$,
even though both the parameter space and the filled Julia fiber
are locally connected there.
\end{abstract}

\maketitle

\section{Introduction}
\subsection{Background}
The quadratic family $f_c(z)=z^2+c$, $c\in\C$, has been a central
object of study in complex dynamics for several decades; see, for example,
\cite{DH,DHPolynomialLike,HubbardYoccoz,McMullen,Lyubich12,Shishikura,MilnorLC,AKLS,KahnLyubichLC,BuffCheritat}.
Let $K_c$ and $J_c=\partial K_c$ denote its filled Julia set and
Julia set, and let $\M$ be the Mandelbrot set.
We study the topology of the family of filled Julia sets over $\M$,
\[
 \KM=\{(c,z)\in\C^2:c\in\M,\ z\in K_c\}.
\]
The parameter projection has base $\M$ and fiber $K_c$.

An important motivation comes from capture straightening
\cite[Sections~1.13--1.14]{InouKiwi}. The cubic capture schema is
represented by
\[
 \g:\{v_0,v_1\}\times\C\to\{v_0,v_1\}\times\C,
 \quad \g(v_j,w)=(v_0,w^2+c_j),\quad j=0,1.
\]
Its fiberwise connectedness locus consists of parameters for which
both critical orbits, starting at $(v_0,0)$ and $(v_1,0)$, are
bounded. Equivalently, $c_0\in\M$ and $c_1\in K_{c_0}$, so this
locus is $\KM$.

Let $f_0$ be a primitive hyperbolic postcritically finite cubic of
capture type \cite[Sections~1.12--1.13]{InouKiwi}. Here primitive
means that distinct bounded Fatou components have disjoint closures.
Its combinatorial renormalization locus \cite[Section~1.1]{InouKiwi} is
\[
 \mathcal C(f_0)=\{f\in\mathcal C_3:
 \lambda_{\mathbb Q}(f)\supset\lambda_{\mathbb Q}(f_0)\},
\]
where $\mathcal C_3$ is the connectedness locus of monic centered
cubics and $\lambda_{\mathbb Q}$ denotes rational lamination
\cite[Section~3.1]{InouKiwi}.
After fixing an internal angle system \cite[Section~1.9]{InouKiwi},
Inou and Kiwi obtain a
bijection $\chi:\mathcal C(f_0)\to\KM$
\cite[Theorems~D and~F]{InouKiwi}. It is discontinuous
\cite[Corollary~1.1]{InouDiscontinuity}, but maps the hyperbolic
component $\mathcal H(f_0)$ biholomorphically onto
\[
 \mathfrak{H}_0=\{(c,z):c\in\mathcal H_0,\ z\in\operatorname{int}K_c\},
\]
where $\mathcal H_0$ is the main cardioid
\cite[Theorem~C]{InouKiwi}.
Milnor studied this universal capture component and its closure,
in multiplier coordinates, as a model for questions about hyperbolic
component boundaries \cite[pp.~19--20]{MilnorBoundaries}.
He proved that $\overline{\mathfrak{H}_0}$ is not locally connected.

\subsection{Main results}
We place $\KM$ in the larger family
\begin{equation}\label{eq:total}
 \MK=\{(c,z)\in\C^2:z\in K_c\},
\end{equation}
which is the non-escaping locus of the skew product
$F:\C^2\to\C^2$, $F(c,z)=(c,f_c(z))$.
Write $\pr:\C^2\to\C$ for the projection $\pr(c,z)=c$ and
$\MK_A=\MK\cap(A\times\C)$ for $A\subset\C$, so that
$\KM=\MK_{\M}$. Related questions for fibered polynomials and
polynomial skew products are studied in \cite{Sester,Jonsson}.
Here the base dynamics is the identity. Our focus is how the topology
of the total space depends on the topology of the base and fibers,
and on their variation across stable and bifurcating parameter regions.

\begin{theorem}\label{thm:main}
The set $\MK$ is closed, connected, and completely invariant under
$F$, and $\pr|_{\MK}:\MK\to\C$ is proper. The set $\KM$ is
compact and connected.
\end{theorem}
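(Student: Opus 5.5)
The plan is to prove each assertion of Theorem~\ref{thm:main} from a single uniform escape radius. Set $R(c)=\max\{2,|c|\}+1$ (any radius with $|z|>R(c)\Rightarrow |f_c(z)|>|z|$ and $|f_c^n(z)|\to\infty$ works). For $|z|\ge \max\{|c|,2\}$ we have $|z^2+c|\ge |z|^2-|z|\ge |z|(|z|-1)$, so once $|z|>\max\{|c|,2\}$ the orbit grows geometrically. Hence
\[
 \MK=\bigcap_{n\ge0}\{(c,z):|f_c^n(z)|\le \max\{|c|,2\}\}.
\]
Each set in this intersection is closed, because $(c,z)\mapsto f_c^n(z)$ is a polynomial in two variables and $c\mapsto\max\{|c|,2\}$ is continuous. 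So $\MK$ is closed. Properness of $\pr|_{\MK}$ follows from the same bound. For compact $A\subset\C$, the fibres $K_c$ with $c\in A$ lie in a fixed disc $\{|z|\le \max_A\max\{|c|,2\}\}$. Thus $\pr^{-1}(A)\cap\MK$ is a closed and bounded subset of $\C^2$, hence compact.

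Complete invariance follows directly from the definition. Since $F$ preserves fibres and $z\in K_c\iff f_c(z)\in K_c$, we get $F^{-1}(\MK)=\MK$. Because $f_c$ is surjective, we also get $F(\MK)=\MK$.

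Compactness of $\KM$ follows from properness, since $\M$ is compact and $\KM=\pr^{-1}(\M)\cap\MK$. For connectedness of $\KM$ I would use a standard fibration lemma. Let $p:X\to B$ be a continuous closed surjection, with $X$ compact Hausdorff, $B$ connected, and all fibres connected. Then $X$ is connected. The proof is short. If $X=U\sqcup V$ with $U,V$ clopen and nonempty, then each fibre, being connected, lies entirely in $U$ or entirely in $V$. Hence $B=p(U)\sqcup p(V)$ is a partition of $B$ into two disjoint nonempty closed sets, a contradiction.

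We apply this lemma to $p=\pr|_{\KM}:\KM\to\M$. This map is closed because $\KM$ is compact, and it is surjective because each $K_c$ is nonempty. The base $\M$ is connected (Douady--Hubbard), and for $c\in\M$ the fibre $K_c$ is connected by the classical Fatou--Julia dichotomy. So $\KM$ is connected.

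For $\MK$ itself the fibres $K_c$ with $c\notin\M$ are Cantor sets, so the lemma does not apply directly. This is the main obstacle, and I would handle it by a chaining argument. Write $\MK=\KM\cup\bigcup_{c\notin\M}(\{c\}\times K_c)$. It then suffices to show that every point $(c,z)$ with $c\notin\M$ lies in the same connected component of $\MK$ as $\KM$.

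As a first attempt, suppose $\MK=U\sqcup V$ with $U,V$ relatively clopen and $\KM\subset U$. Since $\MK$ is closed in $\C^2$, the set $V$ is closed in $\C^2$. For $c$ outside $\M$, the points of $K_c$ are coded by itineraries. I would use continuity of the B\"ottcher coordinate and of the holomorphic motion of $J_c$ over $\C\setminus\M$ (Ma\~n\'e--Sad--Sullivan / structural stability of Cantor Julia sets). For a fixed itinerary, these give a point $z(c)$ depending continuously on $c$ along a ray in the parameter plane, and as $c$ lands on $\partial\M$ the point $z(c)$ accumulates on $K_{c_0}$. Here I would use upper semicontinuity of $c\mapsto K_c$: every limit point of $z(c)$ is a point of $\MK$ over $c_0\in\M$. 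The closure of this path is therefore a connected subset of $\MK$ that meets both $\KM$ and $\{c\}\times K_c$. It forces $(c,z)\in U$, so $V=\emptyset$.

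If tracking individual points is delicate, because the motion over the simply connected domain $\widehat\C\setminus\M$ has monodromy issues only at $\infty$, I would instead restrict to an external ray $\{c: \arg\Phi_\M(c)=\theta\}$ for a parameter ray landing point. Along this ray the motion is single-valued, and the argument above goes through on the ray.
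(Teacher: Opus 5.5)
Your proposal is correct and follows the paper's route. Closedness and properness come from a uniform escape radius, connectedness of $\KM$ from the connected-fibre lemma, and each point over $\C\setminus\M$ is joined to $\KM$ by lifting a path in $\C\setminus\M$ through the $J$-stable motion and taking the closure of the lift.

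The only difference is the choice of path. The paper uses the straight segment to a nearest point of $\M$ and patches local motions along it, so monodromy never arises. Along your external ray you likewise need only a cluster point of the lift, which exists by properness and lies over $\M$ because $G_{\M}\to0$ along the ray. You do not need the ray to land, and landing is not known for every angle.
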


Over hyperbolic parameter regions, $J$-stability and holomorphic motions
\cite{MSS,Slodkowski} give a fiber-preserving local product structure.
The topology of the fiber then determines local connectedness of the
total space.

\begin{theorem}\label{thm:stable-main}
Let $(c,z)\in\MK$. If $c\notin\M$, then $\MK$ is not locally
connected at $(c,z)$. If $c\in\M$ and $f_c$ is hyperbolic, then
$\MK$ is locally connected at $(c,z)$.
\end{theorem}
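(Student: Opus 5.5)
The statement has two parts, and I will treat them separately. In both, local connectedness of $\MK$ at $(c,z)$ is reduced to the topology of the fibers $K_{c'}$ for $c'$ near $c$.

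\emph{Case $c\notin\M$.} Here $K_{c'}$ is a Cantor set for every $c'$ in a small disk $D$ around $c$, because $D\cap\M=\emptyset$. By structural stability on $\C\setminus\M$ there is a holomorphic motion of $K_c$ over $D$, namely the B\"ottcher/symbolic conjugacy on the shift locus. This gives a homeomorphism $\MK_D\cong D\times K_c$ that is fiber preserving. A neighborhood of $(c,z)$ in $\MK$ therefore looks like $D\times(\text{Cantor set})$, which is not locally connected. To make this concrete without quoting the full product structure, take a small open neighborhood $U$ of $(c,z)$ with $\pr(U)\subset D$. Choose a level-$n$ symbolic cylinder $C_n(c')\ni z'$ and its complement in $K_{c'}$; these are clopen and vary continuously in $c'$ (they are defined by preimages of a Jordan curve around the critical value that moves continuously). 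The union over $c'\in D$ of the cylinders is then relatively clopen in $\MK_D$. Consequently any connected neighborhood of $(c,z)$ inside $U$ lies in a single cylinder tube. Since cylinder diameters go to $0$ uniformly near $c$ by uniform expansion, every connected neighborhood has arbitrarily small $z$-width. This contradicts its being a neighborhood: it would have to contain points of $K_{c}$ outside the cylinder but arbitrarily close to $z$, and such points exist because $K_c$ has no isolated points.

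\emph{Case $c\in\M$ hyperbolic.} Then $c$ lies in a hyperbolic component $H$, which is open, and $J_{c'}$ moves holomorphically over $H$ (by \cite{MSS}, extended to $\hat\C$ by \cite{Slodkowski}). The extended motion $h_{c'}$ conjugates $f_c$ to $f_{c'}$ off the attracting basins only on $J$. However, the Fatou set $\hat\C\setminus J$ consists of the basin of infinity and bounded components, so $h_{c'}(K_c)=K_{c'}$ as long as $h$ maps the bounded complementary components of $J_c$ to those of $J_{c'}$. This holds by continuity in $c'$ and the fact that $\infty$ is fixed. Thus $\Phi(c',w)=(c',h_{c'}(w))$ is a homeomorphism $H\times K_c\to\MK_H$, since a holomorphic motion is jointly continuous. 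Because $K_c$ is locally connected for hyperbolic $c$ (Douady--Hubbard), $H\times K_c$ is locally connected. As $\MK_H$ is open in $\MK$ (by properness, Theorem~\ref{thm:main}, and openness of $H$), $\MK$ is locally connected at $(c,z)$.

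The main obstacle is ensuring that the extended motion carries filled Julia sets to filled Julia sets. I plan to handle it by using, in place of an arbitrary Slodkowski extension, the canonical motion given by the B\"ottcher coordinate on the basin of infinity together with the motion of $J$. This combination is continuous by the $\lambda$-lemma. I then define $h_{c'}$ on the interior of $K_c$ by any extension, for instance Slodkowski applied to $J_c\cup(\C\setminus K_c)\cup\{\infty\}$. Since $h_{c'}$ is a homeomorphism that preserves $\C\setminus K$, it automatically sends $K_c$ onto $K_{c'}$.
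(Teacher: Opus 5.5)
Your argument is correct and follows the paper's route: you build a fiber-preserving homeomorphism $\mathcal U\times K_{c}\to\MK_{\mathcal U}$ from the MSS motion of $J_c$ with a S\l{}odkowski extension fixing $\infty$, and then Cantor fibers rule out local connectedness for $c\notin\M$, while local connectedness of $K_c$ gives it for hyperbolic $c\in\M$. Your B\"ottcher-adapted extension is not needed, because any sphere extension fixing $\infty$ already maps the unbounded complementary component of $J_c$ onto that of $J_{c'}$ (as you observe a few lines earlier), and the one step you only assert, continuity of $\Phi^{-1}$, follows from compactness of $K_c$ exactly as in the paper's proof of the product structure.
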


In particular, $\KM$ is locally connected over every hyperbolic
component of $\M$. To study parameters on $\partial\M$, we use
puzzle and parapuzzle shrinking.

Call a pair $(c_0,z_0)\in\KM$ \emph{non-degenerate} if
$c_0\in\M\setminus\overline{\mathcal H_0}$ and
\[
 f_{c_0}^k(0)\ne\alpha_{c_0},\qquad f_{c_0}^k(z_0)\ne\alpha_{c_0}
 \quad\text{for every }k\ge0,
\]
where $\alpha_{c_0}$ is the repelling dividing fixed point.
Choose a rational wake containing $c_0$ and use the puzzle and
parapuzzle pieces of Section~\ref{sec:puzzles}.

\begin{itemize}
\item \textbf{Dynamical shrinking.}
The nested pieces $P_n^{c_0}(z_0)$ define the impression
\[
 I_{c_0}(z_0)=\bigcap_{n\ge0}\overline{P_n^{c_0}(z_0)}.
\]
The intersections of puzzle pieces with $K_{c_0}$
are connected; see \cite[Section~1, Problem~1-1]{MilnorLC}.
Thus, if $I_{c_0}(z_0)=\{z_0\}$, then $K_{c_0}$ is locally connected at $z_0$
\cite{HubbardYoccoz}.
\item \textbf{Parameter shrinking.}
The nested parapuzzle pieces $\mathcal P_n(c_0)$ define the impression
\[
 \mathcal I(c_0)=\bigcap_{n\ge0}\overline{\mathcal P_n(c_0)}.
\]
If $\mathcal I(c_0)=\{c_0\}$, then $\M$ is locally connected at $c_0$
by \cite[Proposition~3.5]{SchleicherFibers}: the separation fiber
of $c_0$ is contained in $\mathcal I(c_0)$, since the parapuzzle
boundaries are formed by rational ray pairs and equipotentials
\cite{Roesch}.
\end{itemize}
For example, both shrinking properties hold for non-degenerate
pairs with $z_0\in J_{c_0}$ when $f_{c_0}$ is non-renormalizable and its
critical point is recurrent \cite{MilnorLC,Roesch}.

We combine the parapuzzle at $c_0$ and the dynamical puzzle at $z_0$
to construct a \emph{mixed puzzle} at $(c_0,z_0)$. The key ingredient is
the holomorphic motion of dynamical puzzle boundaries over a
parapuzzle piece \cite[Lemma~2.5]{Roesch}.
For a non-degenerate pair $(c_0,z_0)$, let
\[
 h_n:\mathcal P_n(c_0)\times\widehat\C\to\widehat\C
\]
be a sphere extension of the motion of $I_{n+1}^{c_0}$, normalized
at $c_0$ and fixing $\infty$, as provided by S\l{}odkowski's
extension theorem \cite{Slodkowski}; see also
\cite[Theorem~3.1]{Roesch}. Write $h_n^{c}=h_n(c,\cdot)$.
Define the mixed puzzle piece of depth $n$ by
\begin{equation}\label{eq:tube-intro}
 \mathfrak P_n(c_0,z_0)
 =\{(c,w):c\in\mathcal P_n(c_0),\
 w\in h_n^{c}(P_n^{c_0}(z_0))\},
\end{equation}
and its impression by
\[
 \mathfrak I(c_0,z_0)=\bigcap_{n\ge0}\overline{\mathfrak P_n(c_0,z_0)}.
\]
The following theorem relates mixed shrinking to the two planar
shrinking properties.

\begin{theorem}\label{thm:puzzle}
Let $(c_0,z_0)$ be a non-degenerate pair. Then:
\begin{enumerate}
\item[\textup{(i)}] Mixed shrinking is equivalent to simultaneous
parameter and dynamical shrinking:
\[
 \mathfrak I(c_0,z_0)=\{(c_0,z_0)\}
 \quad\Longleftrightarrow\quad
 \left\{\begin{aligned}
  \mathcal I(c_0)&=\{c_0\},\\
  I_{c_0}(z_0)&=\{z_0\}.
 \end{aligned}\right.
\]
\item[\textup{(ii)}] Under these equivalent conditions, $\KM$ is
locally connected at $(c_0,z_0)$.
\end{enumerate}
\end{theorem}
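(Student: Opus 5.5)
For part (i), I would first show that $\pr(\mathfrak I(c_0,z_0))=\mathcal I(c_0)$ and that the fiber of $\mathfrak I(c_0,z_0)$ over $c_0$ is $I_{c_0}(z_0)$. Since $h_n$ is normalized at $c_0$, we have $h_n^{c_0}=\mathrm{id}$. Hence the fiber of $\mathfrak P_n(c_0,z_0)$ over $c_0$ is exactly $P_n^{c_0}(z_0)$, and $\pr(\mathfrak P_n)=\mathcal P_n(c_0)$.

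For the direction ($\Rightarrow$), taking closures gives $\overline{\mathcal P_n(c_0)}\subset\pr(\overline{\mathfrak P_n})$. For each $c$ in $\overline{\mathcal P_n(c_0)}$ the fiber is nonempty, using the extension of $h_n$ to the closure, which exists by continuity of the S\l{}odkowski extension on a slightly larger parapuzzle piece. This yields $\mathcal I(c_0)\subset\pr(\mathfrak I)$. It also gives $\{c_0\}\times I_{c_0}(z_0)\subset\mathfrak I$, because $\overline{P_n^{c_0}(z_0)}\times\{c_0\}\subset\overline{\mathfrak P_n}$. So $\mathfrak I=\{(c_0,z_0)\}$ forces both planar impressions to be points.

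For the direction ($\Leftarrow$), let $(c,w)\in\mathfrak I$. Then $c\in\mathcal I(c_0)=\{c_0\}$. I also need $w\in I_{c_0}(z_0)$. Take $(c_k,w_k)\in\mathfrak P_n$ converging to $(c_0,w)$, so that $w_k=h_n^{c_k}(u_k)$ with $u_k\in P_n^{c_0}(z_0)$. I must show that $h_n^{c_k}\to\mathrm{id}$ at $c_0$. Equicontinuity is the key point: for fixed $n$, the map $h_n$ is jointly continuous on $\mathcal P_n(c_0)\times\widehat\C$, and holomorphic motions are uniformly H\"older in $z$ for $c$ in compact hyperbolic subsets. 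Hence $\dist(w_k,u_k)\to0$, so $w\in\overline{P_n^{c_0}(z_0)}$ for every $n$. Nestedness across different $n$ is not needed, because I intersect over all $n$ separately. For fixed $n$ the closure of $\mathfrak P_n$ meets the fiber over $c_0$ only in $\overline{P_n^{c_0}(z_0)}$. One subtlety is the boundary of the parapuzzle: $c_0$ is interior to $\mathcal P_n(c_0)$ by non-degeneracy, so continuity at $c_0$ suffices.

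For part (ii), the plan is to show that $\mathfrak P_n\cap\KM$ is a connected neighbourhood of $(c_0,z_0)$ in $\KM$, with diameters tending to zero by (i) and compactness of the decreasing closed sets $\overline{\mathfrak P_n}$.

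\textbf{Neighbourhood.} Non-degeneracy puts $z_0$ in the interior of $P_n^{c_0}(z_0)$ after passing to the standard thickened or union-of-adjacent-pieces version if needed, and puts $c_0$ in the interior of $\mathcal P_n(c_0)$. Continuity of $h_n$ then makes $\mathfrak P_n$ contain a product neighbourhood. The key fact is that, for $c\in\mathcal P_n(c_0)$, the set $h_n^c(P_n^{c_0}(z_0))$ is the depth-$n$ puzzle piece of $f_c$, since the boundary graph moves holomorphically (Roesch).

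\textbf{Connectedness.} This is the main obstacle. Each fiber $h_n^c(P_n^{c_0}(z_0))\cap K_c$ is connected, by Milnor's Problem 1-1. Write $\mathcal P_n(c_0)\cap\M=\bigcup_c$ of these fibers. I would show that the fiber map $c\mapsto\overline{P_n^c}\cap K_c$ is upper semicontinuous, since $\MK$ is closed by Theorem~\ref{thm:main} and puzzle boundaries move continuously. I would also show that the base $\mathcal P_n(c_0)\cap\M$ is connected, since parapuzzle pieces intersected with $\M$ are connected, by the analogous statement for $\M$ (Roesch, Schleicher). An upper semicontinuous decomposition with connected fibers over a connected base is connected, provided the total set is closed. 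So I would work with closures $\overline{\mathfrak P_n}\cap\KM$, which are closed connected neighbourhoods shrinking to the point; this is exactly local connectedness.

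The delicate step is the upper semicontinuity at parameters on the boundary of the parapuzzle. I would handle it by using a slightly larger piece $\mathcal P_{n-1}$ on which $h$ is defined.
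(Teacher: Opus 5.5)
Part (i) is essentially the paper's argument: the projection identity for $\overline{\mathfrak P_n}$ plus the central-slice identity. Part (ii), however, has a genuine gap in the connectedness step.

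\textbf{The gap in (ii).} For the compact set $\overline{\mathfrak P_n}\cap\KM$, upper semicontinuity of the fibers is automatic, so that is not the delicate point. What the monotone-decomposition lemma really needs is that \emph{every} fiber is connected, including the fibers over $c\in\partial\mathcal P_n\cap\M$. Over such parameters the motion of $I_{n+1}$ breaks down: at landing points of parameter rays with angles in $\Theta_n$, Roesch's Lemma~2.3 puts the critical value on $I_n^c$. So the fiber there is only a set of limits of moving pieces, intersected with $K_c$, and nothing in your plan shows it is connected.

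Your remedy via $\mathcal P_{n-1}$ does not work. In general $\overline{\mathcal P_n}\not\subset\mathcal P_{n-1}$, because pieces of consecutive depths share boundary ray segments (angles in $\Theta_{n-1}\subset\Theta_n$, or the wake boundary). Over $\partial\mathcal P_{n-1}\cap\M$ even the depth-$n$ graph bifurcates.

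Even granting connected closures, you would only obtain \emph{closed} connected neighbourhoods. That is connectedness im kleinen at $(c_0,z_0)$, whereas the paper defines local connectedness by a basis of relatively open connected neighbourhoods.

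Nor can you run the semicontinuity argument on the open set $\mathfrak P_n\cap\KM$ directly. Limits of points of $h_n^{c_j}(P_n)\cap K_{c_j}$ can land on $\partial h_n^c(P_n)\cap K_c$ (preimages of $\alpha_c$), which lie outside the fiber.

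\textbf{The missing idea} is a connected section through the open fibers. The paper uses thinner tubes $\mathfrak P_{n,m}$ with $n\gg m$. Once $\mathcal P_n$ is small (this is where parameter shrinking enters), a repelling periodic point $q_0\in J_{c_0}\cap P_m$ continues holomorphically to $q(c)\in J_c\cap h_m^c(P_m)$ on all of $\mathcal P_n$. The relatively open set $\mathfrak P_{n,m}\cap\KM$ is then a union of connected fibers $\{c\}\times(K_c\cap h_m^c(P_m))$, $c\in\mathcal P_n\cap\M$. Each fiber meets the connected graph of $q$ over the connected base, so the union is connected.

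A local version also works for your diagonal tubes. Given a separation $A\sqcup D$ of $\mathfrak P_n\cap\KM$, lower semicontinuity of $c\mapsto J_c$ and openness of $\mathfrak P_n$ show that $\pr(A)$ and $\pr(D)$ are closed in the connected set $\mathcal P_n\cap\M$.

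\textbf{Two repairs in (i).} First, the slices of $\overline{\mathfrak P_n}$ over $\overline{\mathcal P_n}$ are nonempty because $\mathfrak P_n$ is bounded ($G_c\le 2^{-n}$ on the tube, plus the uniform escape estimate). It is not because $h_n$ extends past $\mathcal P_n$, which it does not. Second, passing from $\mathcal I(c_0)\subset\bigcap_n\pr(\overline{\mathfrak P_n})$ to $\mathcal I(c_0)\subset\pr(\mathfrak I(c_0,z_0))$ needs the nesting $\mathfrak P_{n+1}\subset\mathfrak P_n$. This comes from the compatibility of the motions, and it makes the slices over a fixed $c$ a decreasing sequence of nonempty compacta.
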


In particular, $\KM$ is locally connected at every non-degenerate
pair $(c,z)$ with $z\in J_c$ for which $f_c$ is non-renormalizable
and has a recurrent critical point.
This raises a natural question: does local connectedness of $\M$
at $c$, together with local connectedness of $K_c$ at $z$, imply
local connectedness of $\KM$ at $(c,z)$?
Parabolic implosion \cite{Lavaurs,DouadyContinuity,DSZ,Shishikura},
specifically the Lavaurs limits in \cite{RojasYampolsky}, yields a
negative answer. We use \emph{primitive parabolic parameter} for
the root of a primitive hyperbolic component, in the sense of
\cite[Section~6]{MilnorPortrait}.

\begin{theorem}\label{thm:parabolic-main}
There exist a primitive parabolic parameter $c_*\in\partial\M$ and
$z_*\in K_{c_*}\setminus J_{c_*}$ such that $\KM$ is not locally
connected at $(c_*,z_*)$.
\end{theorem}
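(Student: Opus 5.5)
We will take $c_*=1/4$, the cusp of the main cardioid. It is the root of the primitive hyperbolic component $\mathcal H_0$ and is real parabolic. The idea is to exploit the discontinuity of $c\mapsto K_c$ at $c_*$ under parabolic implosion. By the Lavaurs limits of \cite{RojasYampolsky}, there are parameters $c_n\to c_*$ inside $\M$ along which $K_{c_n}$ converges in the Hausdorff topology to a Lavaurs set $K_{c_*,\sigma}$ strictly smaller than $K_{c_*}$. We may take the $c_n$ real and slightly larger than $1/4$ in the limiting sense, or in small copies of $\M$ accumulating at $1/4$ from outside the cardioid. We then choose $z_*$ in the parabolic basin, that is, $z_*\in\operatorname{int}K_{c_*}$, with $z_*\notin K_{c_*,\sigma}$. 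Concretely, $z_*$ should be an interior point whose orbit passes through the eggbeater region at a Lavaurs phase that lets it escape under the transit map $g_\sigma$.

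The key steps are as follows. First, fix a phase $\sigma$ and a sequence $c_n\in\M$, $c_n\to c_*$, with $c_n\notin\overline{\mathcal H_0}$, such that $K_{c_n}\to K_{c_*,\sigma}$, where $K_{c_*,\sigma}\subsetneq K_{c_*}$. Since $\M$ accumulates at $1/4$ only tangentially along the real axis, we take the $c_n$ in the real satellite/primitive copies $c_n\in\M\cap(1/4,\infty)$? That intersection is empty, so instead we use complex parameters $c_n$ approaching $1/4$ in a cusp-shaped region outside the cardioid. Here small Mandelbrot copies exist, with $c_n$ at their centres, and phases $\sigma_n\to\sigma$. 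Second, choose a small closed ball $B$ around $z_*$ with $B\cap K_{c_*,\sigma}=\emptyset$, by upper semicontinuity of Hausdorff limits. It follows that $B\cap K_{c_n}=\emptyset$ for large $n$.

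Third, suppose $\KM$ were locally connected at $(c_*,z_*)$. Then there is a connected neighbourhood $U$ of $(c_*,z_*)$ in $\KM$ with $U\subset D\times B'$, where $D$ is a small disk and $B'\subset B$. Since $\operatorname{int}K_c$ moves continuously over $\mathcal H_0$ by holomorphic motion (Theorem~\ref{thm:stable-main}), $U$ contains the points $(c,z_*)$ for $c\in\mathcal H_0$ near $c_*$. Uniform local connectedness also forces $U$ to contain, for every nearby $(c,w)\in\KM$, a connected subset reaching it. The cleaner formulation uses the criterion that local connectedness at a point fails if there are points $p_n\to p$ such that $p_n$ and $p$ lie in no common small connected subset. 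We will take $p_n=(c_n,w_n)$ with $w_n\in K_{c_n}$, $w_n\to z_*$. Such points exist because $K_{c_n}\to K_{c_*,\sigma}$ only in the lower sense along a subsequence, so instead we use a second sequence $c_n'$ with phase $\sigma'$ for which $z_*\in K_{c_*,\sigma'}$. Any connected set in $\KM\cap(D\times B)$ joining $(c_n',w_n)$ to $(c_*,z_*)$ projects to a connected subset of $\M\cap D$ containing $c_n'$ and $c_*$. Along the way its fiber coordinate must stay in $B$. We will show that every such connected subset of $\M$ must cross a "phase-$\sigma$" parameter region where $K_c\cap B=\emptyset$. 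Hence it is disconnected, which is a contradiction.

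The main obstacle is this last separation step. We must show that inside $\M\cap D$ any path-connected, or merely connected, set from $c_n'$ to $c_*$ passes through parameters whose Lavaurs phase is close to $\sigma$. Our plan is to use the fact that near the cusp, $\M\setminus\overline{\mathcal H_0}$ consists of limbs/copies attached along the cardioid boundary whose phase parameter (the Fatou coordinate of $c$) winds monotonically. We will use the parapuzzle/ray structure, via external rays landing at the cardioid roots, to separate $c_n'$ from $c_*$ by closed sets where the phase is near $\sigma$. Connected sets in $\KM$ over $\overline{\mathcal H_0}$ are harmless, since the fibers vary continuously there. The remaining case, connected sets passing through the cardioid, will be handled by noting that $z_*$ lies in the interior basin and those fibers move holomorphically, so they cannot reach $(c_n',w_n)$ without passing through the boundary limbs.
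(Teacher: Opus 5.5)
\textbf{There is a genuine gap.} It is exactly the step you call the main obstacle. You never exhibit a parameter that separates $c'_n$ from $c_*$ in $\M$ and whose filled Julia set misses a fixed disk about $z_*$, and at $c_*=1/4$ the mechanism you sketch does not supply one.

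Near $1/4$, $\M$ is $\overline{\mathcal H_0}$ together with the limbs $\mathcal L_{p/q}$ attached at points of $\partial\mathcal H_0$. A connected subset of $\M$ joining $c'_n\in\mathcal L_{p/q}$ to $1/4$ is therefore forced through only two kinds of points: the root of that limb and the cut points of $\M$ inside the limb. Otherwise it may run freely through $\overline{\mathcal H_0}$. Parameters of phase near $\sigma$ do not form a separating set, so nothing forces a connected set to cross them. Your two sequences $c_n$ (phase $\sigma$) and $c'_n$ (phase $\sigma'$) are never linked by any separation property, so the contradiction in your third step does not follow.

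Your way of dismissing paths through the cardioid is also wrong, because the fibers do not vary continuously over $\overline{\mathcal H_0}$ at $1/4$. In the coordinate $\pi/\sqrt{c-1/4}$, the boundary $\partial\mathcal H_0$ near $1/4$ is exactly the pair of horizontal half-lines with imaginary part $\pm\pi$. Hence boundary parameters with internal angle $t\to 0$, and tangential approaches inside $\mathcal H_0$, lie in the Lavaurs regime. Their filled Julia sets accumulate on Lavaurs sets strictly smaller than $K_{1/4}$. For the same reason, your claim that $U$ contains $(c,z_*)$ for all $c\in\mathcal H_0$ near $c_*$ is unjustified; it holds only for nontangential approach.

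A smaller point: to obtain $w_n\in K_{c'_n}$ with $w_n\to z_*$ you need $z_*\in\LimInf_{n\to\infty}J_{c'_n}$, for instance $z_*\in\partial K_{c_*,\sigma'}$. Membership $z_*\in K_{c_*,\sigma'}$ is not enough. Likewise, Hausdorff convergence $K_{c_n}\to K_{c_*,\sigma}$ is more than the available semicontinuity statements give.

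\textbf{How the paper gets the gate.} It uses the real line.
\begin{enumerate}
\item It takes $c_*=-7/4$, the real period-three primitive root, where real primitive roots accumulate from the right.
\item Rojas--Yampolsky supply two sequences of such roots $c^{(1)}_k,c^{(2)}_k\searrow c_*$ and a point $z_*\in\partial L^{(1)}_{c_*}\setminus L^{(2)}_{c_*}$, with $\partial L^{(1)}_{c_*}\subset\LimInf_k J_{c^{(1)}_k}$ and $\LimSup_k K_{c^{(2)}_k}\subset L^{(2)}_{c_*}$.
\item It interlaces $c_*<a_{n+1}<b_n<a_n$, with $a_n$ from the first sequence and $b_n$ from the second.
\item Each $b_n$ separates $a_n$ from $c_*$ in $\M$: its two characteristic rays are complex conjugate and, together with $b_n$ and $\infty$, form a Jordan curve meeting $\R$ only at $b_n$ and $\infty$ (Lemma~\ref{lem:real-gate}).
\item Lemma~\ref{lem:gate} then concludes.
\end{enumerate}
This mechanism is unavailable at $1/4$, since $\M\cap(1/4,\infty)=\varnothing$ and the only ray landing at $1/4$ is the real ray $\mathcal R(0)$. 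That is why the paper explicitly excludes $c_*=1/4$.

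\textbf{Possible repairs.} To keep $c_*=1/4$, you would have to prove that the roots of the limbs containing your $c'_n$ (or cut points inside those limbs) have filled Julia sets uniformly missing a disk about $z_*$, while $z_*\in\LimInf_n J_{c'_n}$. Otherwise, switch to a real primitive root of period greater than one.
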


At a parabolic parameter, both $\M$ and $K_c$ are locally connected
\cite{TanParabolic,DH,MilnorBook}; thus the obstruction in
Theorem~\ref{thm:parabolic-main} comes from the variation of the
fibers. The point $z_*$ lies in $K_{c_*}\setminus J_{c_*}$.

Another application of parabolic implosion is Shishikura's theorem
that $\partial\M$ has Hausdorff dimension $2$ \cite{Shishikura}.
His dimension estimates also yield full Hausdorff dimension for
the boundary of $\KM$.

\begin{theorem}\label{thm:dimension-main}
For the boundary in $\C^2$ and the Euclidean metric,
\[
 \mathrm{H.dim}\,\partial\KM=4.
\]
\end{theorem}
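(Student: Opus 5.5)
The upper bound is trivial: $\partial\KM\subset\C^2\cong\R^4$, so $\mathrm{H.dim}\,\partial\KM\le 4$. For the lower bound I would use Shishikura's construction \cite{Shishikura}. It provides parameters $c\in\partial\M$, accumulating at a parabolic parameter with multiplier $1$ perturbed through parabolic implosion, such that $\mathrm{H.dim}\,J_c$ is arbitrarily close to $2$. More precisely, it gives a dense $G_\delta$ subset $\Sigma\subset\partial\M$ near such a parameter with $\mathrm{H.dim}\,J_c=2$ for every $c\in\Sigma$. More usefully, for every $\varepsilon>0$ it gives a compact set $A_\varepsilon\subset\partial\M$ and an open set $U\ni$ (a base point) such that:
\begin{enumerate}
\item[(a)] $\mathrm{H.dim}\,A_\varepsilon\ge 2-\varepsilon$;
\item[(b)] for $c$ in a neighborhood of $A_\varepsilon$, $J_c$ contains a hyperbolic Cantor set $\Lambda_c$ of dimension $\ge 2-\varepsilon$, moving holomorphically in $c$.
\end{enumerate}
Indeed, Shishikura's proof of $\mathrm{H.dim}\,\partial\M=2$ proceeds exactly by transferring the dimension of a hyperbolic subset $\Lambda_c$ of $J_c$ (created by implosion from a parabolic basin) into parameter space via a Misiurewicz-type transversality.

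First I would check that $\{(c,z):c\in\partial\M,\ z\in J_c\}\subset\partial\KM$. The set $\KM$ is closed by Theorem~\ref{thm:main}. Any neighborhood of such $(c,z)$ meets points $(c',z)$ with $c'\notin\M$, so it meets the complement of $\KM$. Hence it suffices to bound from below the dimension of
\[
 X_\varepsilon=\{(c,z):c\in A_\varepsilon,\ z\in\Lambda_c\}\subset\partial\KM .
\]

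Next I would use the holomorphic motion $\Lambda_c=h(c,\Lambda_{c_1})$ over a small disk $D\ni c_1$ covering a piece of $A_\varepsilon$. Its restriction can be taken to be a bi-Lipschitz map in the following sense. Since $\Lambda_c$ is a hyperbolic set, the motion is given by conjugacies that depend holomorphically on $c$, with $z$-Hölder exponent tending to $1$ as $D$ shrinks (a standard result of Mañé–Sad–Sullivan and Słodkowski). So the map
\[
 \Phi:D\times\Lambda_{c_1}\to\C^2,\qquad \Phi(c,\zeta)=(c,h(c,\zeta))
\]
is a homeomorphism onto the total space of the motion. It is Lipschitz in $c$, because for fixed $\zeta$ the function $c\mapsto h(c,\zeta)$ is holomorphic, uniformly bounded, and hence has uniformly bounded derivative on a smaller disk. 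It is $(1-\varepsilon)$-Hölder in $\zeta$. I would apply $\Phi^{-1}$, which is Lipschitz in $c$ and $(1-\varepsilon)$-Hölder in $z$ for a small enough disk. By the product inequality $\mathrm{H.dim}(A\times B)\ge\mathrm{H.dim}A+\mathrm{H.dim}B$, this gives
\[
 \mathrm{H.dim}\,X_\varepsilon\ge(1-\varepsilon)\bigl(\mathrm{H.dim}(A_\varepsilon\cap D)+\mathrm{H.dim}\,\Lambda_{c_1}\bigr)\ge(1-\varepsilon)(4-2\varepsilon).
 \]
Letting $\varepsilon\to0$ yields $\mathrm{H.dim}\,\partial\KM\ge 4$.

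The main obstacle is step (a)–(b): producing simultaneously a parameter set of dimension near $2$ and fiberwise hyperbolic sets of dimension near $2$ over a common small disk. In addition, Shishikura's parameter set must be localizable inside $D$ while keeping its dimension. I would extract both from Shishikura's argument: near a Misiurewicz parameter $c_1$ obtained after implosion, $\Lambda_{c_1}$ has dimension near $2$. The transversality map $c\mapsto$ (critical value relative to $\Lambda_c$) sends a small disk around $c_1$ onto a neighborhood in a quasi-conformal, nearly conformal way, so it transfers $\Lambda_{c}$ to a subset $A_\varepsilon\subset\partial\M\cap D$ of dimension $\ge 2-\varepsilon$. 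The remaining care goes into the Hölder-exponent control, which I would handle by shrinking $D$.
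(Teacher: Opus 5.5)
Your argument is correct, but it takes a different route from the paper in how base dimension and fiber dimension are combined.

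\textbf{The paper's route.} It uses no motion at all. It picks $c_s\in\partial\M$ with $\operatorname{hypdim}(f_{c_s})>s$. Openness of $\{\operatorname{hypdim}>s\}$ (Shishikura's Property (1.3)) gives a closed disk on which every $c\in\partial\M$ satisfies $\mathrm{H.dim}\,K_c\ge\mathrm{H.dim}\,J_c>s$. Theorem~A gives $\mathrm{H.dim}(\partial\M\cap\overline{D(c_s,r)})=2$. A Fubini-type slicing inequality (\cite[Theorem~2.1]{HeraKeletiMathe}) then yields $\mathrm{H.dim}\,\MK_{\mathcal A_s}\ge 2+s$.

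\textbf{Your route.} You exhibit a genuine near-product subset $\Phi\bigl((A_\varepsilon\cap D')\times\Lambda_{c_1}\bigr)$. You pay for it with Mori-type H\"older control of the holomorphic motion and the product inequality $\mathrm{H.dim}(A\times B)\ge\mathrm{H.dim}\,A+\mathrm{H.dim}\,B$.

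\textbf{Comparison.} The slicing route is shorter. It needs only a pointwise lower bound on fiber dimension, with no uniformity or motion in $c$ and no shrinking of disks to push H\"older exponents to $1$. Your route avoids the slicing theorem for a non-Lebesgue base measure, and it produces a concrete bi-H\"older product piece of $\partial\KM$.

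\textbf{The step you flag as the main obstacle.} This dissolves with exactly the inputs the paper uses, so there is no need to re-derive Shishikura's transversality argument.
\begin{itemize}
\item By density of $\mathcal V_{2-\varepsilon}\cap\partial\M$, choose $c_1\in\partial\M$ with $\operatorname{hypdim}(f_{c_1})>2-\varepsilon$.
\item Take a hyperbolic set $\Lambda_{c_1}$ of dimension $>2-\varepsilon$. It persists and moves holomorphically over some disk $D\ni c_1$.
\item Theorem~A applied to any concentric smaller disk $D'$ gives $\mathrm{H.dim}(\partial\M\cap D')=2$. This is the localization you wanted.
\end{itemize}

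\textbf{One imprecision.} $\Phi^{-1}$ is not Lipschitz in $c$ in general: for fixed $w$, the map $c\mapsto h_c^{-1}(w)$ is only H\"older. What you actually need, joint $\beta$-H\"older continuity of $\Phi^{-1}$ on the total space, does hold. Write $w=h_c(\zeta)$ and $w'=h_{c'}(\zeta')$. The uniform $\beta$-H\"older bound for $h_c^{-1}$ and the Cauchy-estimate Lipschitz bound $|h_c(\zeta')-h_{c'}(\zeta')|\le L|c-c'|$ give
\[
|\zeta-\zeta'|\le C\bigl(|w-w'|+L|c-c'|\bigr)^{\beta}.
\]
This yields $\mathrm{H.dim}\,X_\varepsilon\ge\beta\bigl(\mathrm{H.dim}(A_\varepsilon\cap D')+\mathrm{H.dim}\,\Lambda_{c_1}\bigr)$ as you claim.
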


More precisely, for every open $\mathcal U\subset\C$ with
$\mathcal U\cap\partial\M\ne\varnothing$,
\[
 \mathrm{H.dim}\,\bigl(\MK_{\partial\M}
 \cap(\mathcal U\times\C)\bigr)=4.
\]
We obtain this estimate by combining Shishikura's results with a
slicing inequality. Since $\MK_{\partial\M}\subset\partial\KM$,
the theorem follows.

\subsection*{Organization}

Section~\ref{sec:basic} proves Theorems~\ref{thm:main} and
\ref{thm:stable-main}. Section~\ref{sec:puzzles} constructs the mixed
puzzles and proves Theorem~\ref{thm:puzzle}.
Section~\ref{sec:boundary} proves Theorems~\ref{thm:parabolic-main}
and~\ref{thm:dimension-main}, in
Subsections~\ref{sec:parabolic} and~\ref{sec:dimension}, respectively.

\section{Basic properties}\label{sec:basic}
\subsection{Preliminary facts}
For $a\in\C$ and $r>0$, write $D(a,r)=\{w\in\C:|w-a|<r\}$ and
$\widehat\C=\C\cup\{\infty\}$. Subsets carry their subspace
topologies; closures and boundaries are taken in the stated ambient
spaces. Local connectedness at a point means the existence of a basis
of relatively open connected neighborhoods, termed \emph{open local
connectivity} in \cite{SchleicherFibers}.

The Mandelbrot set $\M$ is compact and connected, and
\cite{DH,MilnorBook}
\[
 \begin{cases}
 K_c\text{ is connected},&c\in\M,\\
 K_c=J_c\text{ is a Cantor set},&c\notin\M.
 \end{cases}
\]
Hyperbolic Julia sets move holomorphically \cite{MSS,McMullen},
holomorphic motions over a disk extend to $\widehat{\C}$
by S\l{}odkowski's theorem \cite{Slodkowski}
(see also \cite[Theorem~3.1]{Roesch}), and
$K_c$ is locally connected for hyperbolic $c\in\M$
\cite[Part II, Expos\'e X, Section 1]{DH}.

\begin{lemma}[Uniform escape radius]\label{lem:escape}
For every $A\ge0$ there exists $R>0$ such that, for all $|c|\le A$,
\[
 K_c\subset\overline{D(0,R)},\qquad
 K_c=\bigcap_{k\ge0} f_c^{-k}\bigl(\overline{D(0,R)}\bigr).
\]
\end{lemma}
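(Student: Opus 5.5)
The plan is a direct estimate on $|f_c(w)|$ for large $|w|$. Fix $A\ge0$ and set $R=\max\{2,\,A+1\}$, or any $R$ with $R^2-A>R$ and $R\ge 2$; for instance $R=A+2$ works. The first step is the escape estimate: for $|c|\le A$ and $|w|>R$ we have
\[
|f_c(w)|\ge |w|^2-|c|\ge |w|^2-A=|w|\Bigl(|w|-\tfrac{A}{|w|}\Bigr).
\]
Since $|w|>R\ge A+2$, we get $|w|-A/|w|\ge |w|-A/2\ge 1+\delta$ with $\delta=R-A-1\ge1$. Hence $|f_c(w)|\ge(1+\delta)|w|>R$. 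Inductively, $|f_c^k(w)|\ge(1+\delta)^k|w|\to\infty$, so $w\notin K_c$. This gives $K_c\subset\overline{D(0,R)}$ for all $|c|\le A$.

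The second step is the intersection identity. For ``$\subset$'': $K_c$ is forward invariant, since $f_c(K_c)\subset K_c$ directly from the definition of bounded orbits. So $z\in K_c$ implies $f_c^k(z)\in K_c\subset\overline{D(0,R)}$ for all $k$, which means $z\in f_c^{-k}(\overline{D(0,R)})$ for every $k\ge0$. For ``$\supset$'': if $f_c^k(z)\in\overline{D(0,R)}$ for every $k$, the orbit is bounded by $R$, so $z\in K_c$ by definition.

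Nothing here is a real obstacle. The only point requiring care is choosing $R$ so that the expansion factor stays uniformly above $1$ on $\{|w|>R\}$ for all $|c|\le A$ simultaneously, which the choice $R=A+2$ secures. Note also that $R$ depends only on $A$, not on $c$, which is exactly the uniformity later used for properness of $\pr|_{\MK}$ in Theorem~\ref{thm:main}.
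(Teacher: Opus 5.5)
Your proof is correct and follows essentially the same route as the paper: a uniform expansion estimate $|f_c(w)|\ge|w|^2-A\ge 2|w|$ outside a disk whose radius depends only on $A$, followed by the two inclusions from forward invariance of $K_c$ and the definition of bounded orbits. One small remark: the bound $\delta=R-A-1\ge1$ holds only when $R\ge A+2$, so the alternatives you list first (e.g.\ $R=\max\{2,A+1\}$) would instead need the uniform factor $R-A/R>1$, which follows from $R^2-A>R$ and the monotonicity of $x\mapsto x-A/x$.
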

\begin{proof}
Choose $R>2$ with $R^2-A>2R$. For $|c|\le A$ and $|w|\ge R$,
\[
 |f_c(w)|\ge |w|^2-A>2|w|.
\]
Consequently, $|f_c^k(z)|\ge R$ for some $k$ implies $z\notin K_c$.
Conversely, $|f_c^k(z)|\le R$ for every $k\ge0$ implies $z\in K_c$.
\end{proof}

\begin{lemma}[Connected fibers]\label{lem:monotone}
Let $p:X\to B$ be a continuous surjection, where $X$ is compact and
$B$ is connected and Hausdorff. If $p^{-1}(\{b\})$ is connected
for every $b\in B$, then $X$ is connected.
\end{lemma}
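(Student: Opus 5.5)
We argue by contradiction, using only the closedness of $p$ and the connectedness of the fibers. Suppose $X=U\cup V$ with $U,V$ disjoint, nonempty, and relatively open in $X$. Because $X$ is compact, both $U$ and $V$ are also closed in $X$, hence compact. Their images $p(U)$ and $p(V)$ are therefore compact subsets of the Hausdorff space $B$, so both are closed in $B$.

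The key step is to show that $p(U)\cap p(V)=\varnothing$. Suppose some $b$ lies in $p(U)\cap p(V)$. Then the fiber $p^{-1}(\{b\})$ meets both $U$ and $V$. The sets $U\cap p^{-1}(\{b\})$ and $V\cap p^{-1}(\{b\})$ are then disjoint, nonempty, and relatively open in the fiber. Together they separate $p^{-1}(\{b\})$, which contradicts the assumption that every fiber is connected.

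Next we use surjectivity of $p$, which gives $B=p(X)=p(U)\cup p(V)$. Thus $B$ is the union of two disjoint, nonempty, closed sets. This contradicts the connectedness of $B$, so $X$ is connected.

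The only point that needs care is why $p(U)$ and $p(V)$ are closed. This uses the compactness of $X$ (closed subsets of a compact space are compact) together with the Hausdorff property of $B$ (compact subsets of a Hausdorff space are closed). Beyond that, the argument is routine point-set topology.
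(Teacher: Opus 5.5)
Your proof is correct and is the same argument the paper gives: separate $X$ into two compact pieces, use fiber connectedness to make their images disjoint, and use the Hausdorff property to make the images closed, which separates $B$. One small wording fix: $U$ and $V$ are closed in $X$ because each is the complement of the other open set, not because $X$ is compact; compactness of $X$ is what then makes them compact.
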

\begin{proof}
Suppose $X=A\sqcup D$ is a separation into nonempty open sets.
Both $A$ and $D$ are compact. Connectedness of each fiber gives
\[
 p(A)\cap p(D)=\varnothing,\qquad p(A)\cup p(D)=B.
\]
Since $B$ is Hausdorff, $p(A)$ and $p(D)$ are nonempty closed sets.
They therefore form a separation of $B$, a contradiction.
\end{proof}

\subsection{Closedness, invariance, and connectedness}\label{sec:global}
\begin{proof}[Proof of Theorem~\ref{thm:main}]
The complete invariance of $K_c$ under $f_c$ gives $F^{-1}(\MK)=\MK=F(\MK)$.

Let $(c_j,z_j)\in\MK$ with $(c_j,z_j)\to(c,z)$. Choose $A>0$ such that $|c|\leq A$ and $|c_j|\leq A$ for every $j$.
Let $R$ be given by Lemma~\ref{lem:escape}.
Continuity of $(c,z)\mapsto f_c^k(z)$ yields
\[
 |f_c^k(z)|=\lim_{j\to\infty}|f_{c_j}^k(z_j)|\le R
 \qquad(k\ge0).
\]
Thus $z\in K_c$, so $\MK$ is closed. 

If $B\subset\C$ is compact,
the same lemma gives a uniform $R$ such that
\[
 (\pr|_{\MK})^{-1}(B)=\MK_B
 \subset B\times\overline{D(0,R)}.
\]
The left-hand side is closed in the compact set on the right, proving
properness. Every $f_c$ has a fixed point in $K_c$, so
$\pr(\MK)=\C$.

Since $\pr|_{\MK}$ is proper and $\M$ is compact,
$
 \KM=(\pr|_{\MK})^{-1}(\M)
$
is compact. The continuous surjection
$
 p=\pr|_{\KM}:\KM\to\M
$
has connected fibers $p^{-1}(\{c\})=\{c\}\times K_c$ for all
$c\in\M$. Lemma~\ref{lem:monotone} therefore implies that $\KM$ is connected.

To prove that $\MK$ is connected, we attach each $\bx\in\MK\setminus\KM$ to $\KM$ by a continuum. 
Write $\bx=(c_0,z_0)$, choose $a\in\M$ with
$|c_0-a|=\dist(c_0,\M)=d>0$, and set
\[
 \gamma:[0,1]\to\C,\qquad
 \gamma(t)=(1-t)c_0+ta.
\]
Thus
$
 \gamma([0,1))\subset\C\setminus\M$ and $\gamma(1)=a\in\M.
$

For each $b\in\gamma([0,1))$, hyperbolic $J$-stability
\cite{MSS} gives a disk
$\mathcal U_b\Subset\C\setminus\M$ containing $b$ and a jointly
continuous holomorphic motion
\[
 h_b:\mathcal U_b\times J_b\to\C,
 \qquad h_b(b,\zeta)=\zeta,
\]
whose slices $h_b(c,\cdot):J_b\to J_c$ are homeomorphisms.
Whenever $\gamma([u,v])\subset\mathcal U_b$ and
$w\in J_{\gamma(u)}$, define
\begin{equation}\label{eq:local-transport}
 z(t)=h_b\!\left(\gamma(t),
    h_b(\gamma(u),\cdot)^{-1}(w)\right),\qquad t \in [u,v].
\end{equation}
Then $z(u)=w$ and $z(t)\in J_{\gamma(t)}$, and $z$ is continuous.

Set $t_k=1-2^{-k}$ for $k\ge0$. Each interval
$[t_k,t_{k+1}]$ admits a finite partition
$t_k=s_0<\cdots<s_\ell=t_{k+1}$ such that
$\gamma([s_{i-1},s_i])\subset\mathcal U_{b_i}$.
Starting from $z(0)=z_0\in K_{c_0}=J_{c_0}$, apply
\eqref{eq:local-transport} successively on these subintervals, with
$w=z(s_{i-1})$. The formulas agree at their endpoints and define
a continuous lift
\[
 L:[0,1)\to\MK,\qquad
 L(t)=(\gamma(t),z(t)),\qquad L(0)=\bx.
\]
By properness,
$
 \mathfrak C:=\overline{L([0,1))}
 \subset\MK_{\gamma([0,1])}
$
is compact. It is connected because $L([0,1))$ is connected.
A subsequence of $L(t_k)$ converges to some $(a,w)\in\mathfrak C$;
since $a\in\M$, this limit lies in $\mathfrak C\cap\KM$.

Only a cluster point of $L(t)$ as $t\to1$ is needed. Therefore
\[
 \MK=\KM\cup
 \left(\bigcup_{\bx\in\MK\setminus\KM}\mathfrak C_{\bx} \right),
 \qquad \mathfrak C_{\bx}\cap\KM\ne\varnothing,
\]
where each $\mathfrak C_{\bx}$ is connected. Thus the union is connected.
\end{proof}

\subsection{Local products at stable parameters}\label{sec:stable}

The following proposition gives a local product structure over hyperbolic parameters.

\begin{proposition}[Local product structure]\label{prop:stable-product}
If $f_{c_0}$ is hyperbolic, there exist a disk $\mathcal U\ni c_0$
and a homeomorphism
\[
 T:\mathcal U\times K_{c_0}\to\MK_{\mathcal U}
\]
such that $\pr(T(c,z))=c$ and $T(c_0,z)=(c_0,z)$ for all
$(c,z)\in\mathcal U\times K_{c_0}$.
\end{proposition}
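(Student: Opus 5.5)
The idea is to move the whole filled Julia set holomorphically, not just $J_{c_0}$, by using the dynamics on the Fatou set. There are two cases. If $c_0\notin\M$, then $K_c=J_c$ for all $c$ in a disk $\mathcal U\Subset\C\setminus\M$. In that case $J$-stability \cite{MSS} gives a holomorphic motion $h:\mathcal U\times J_{c_0}\to\C$, and we set $T(c,z)=(c,h(c,z))$. So the substantive case is $c_0\in\M$ hyperbolic, which means $c_0$ lies in a hyperbolic component $\mathcal H$ and $f_{c_0}$ has an attracting cycle of period $p$. We shrink $\mathcal U\subset\mathcal H$ to a disk on which the attracting cycle $a_1(c),\dots,a_p(c)$ moves holomorphically, with multiplier bounded away from $1$ in modulus.

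The plan is to build an equivariant holomorphic motion of all of $K_{c_0}$ over $\mathcal U$, in three steps.
\begin{enumerate}
\item Move $J_{c_0}$ by $J$-stability, which gives $h^c:J_{c_0}\to J_c$ conjugating $f_{c_0}$ to $f_c$.
\item Extend to the immediate basins. Choose Koenigs linearizations, or B\"ottcher coordinates in the superattracting case, depending holomorphically on $c$, and small round neighbourhoods $V_c$ of the cycle whose boundaries move holomorphically. Transport $\overline{V_{c_0}}$ to $\overline{V_c}$ via the linearizing coordinates composed with a fixed homeomorphism in the linear model. Alternatively, restrict a Slodkowski extension to a fundamental annulus $\overline{V_{c_0}}\setminus f_{c_0}^{p}(V_{c_0})$ and pull back.
\item Spread the motion to all of $\operatorname{int}K_{c_0}$ by pulling back under $f_c$, as in the standard proof of the $\lambda$-lemma for stable families. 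Every point of $\operatorname{int}K_{c_0}$ eventually lands in $V_{c_0}$. Points whose forward orbit avoids the critical point $0$ have unique equivariant lifts. The grand orbit of the critical point, if it lies in the Fatou set, is finite up to the basin structure, and it is handled by choosing the fundamental domain to avoid the critical orbit. This works because hyperbolicity on $\mathcal U$ keeps the critical orbit in the basin.
\end{enumerate}
Define $T(c,z)=(c,h^c(z))$ on $\mathcal U\times K_{c_0}$. The normalization $T(c_0,z)=(c_0,z)$ and the fibre preservation $\pr\circ T=\pr$ hold by construction. Each slice is a bijection $K_{c_0}\to K_c$, because it conjugates $f_{c_0}$ to $f_c$ on both $J$ and the Fatou components.

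A simpler route, if we want to avoid explicit pullbacks, is the following. Use Slodkowski's theorem \cite{Slodkowski} to extend the motion of $J_{c_0}$ together with the moving cycle to a motion $H$ of $\widehat\C$ fixing $\infty$. Then replace it by the \emph{equivariant} extension obtained by lifting under $f_c$, which is the Ma\~n\'e--Sad--Sullivan/Bers--Royden improvement. This sends bounded Fatou components of $f_{c_0}$ to those of $f_c$, because it is a conjugacy off the critical orbit. Its restriction to $K_{c_0}$ gives $T$. Either way, $K_c=\widehat\C\setminus(\text{basin of }\infty)$, and the motion preserves basins of $\infty$, so $h^c(K_{c_0})=K_c$.

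The main obstacle is showing that $T$ is a homeomorphism, which means joint continuity of $T$ and of $T^{-1}$ on $\mathcal U\times K_{c_0}$. Holomorphic motions are jointly continuous: this is the $\lambda$-lemma, where equicontinuity in $z$ follows from quasiconformality with dilatation controlled by the hyperbolic distance in $\mathcal U$. So $T$ is continuous and bijective. For the inverse, we shrink $\mathcal U$ to a compactly contained disk $\mathcal U'\Subset\mathcal U$. Then $T$ restricted to the set $\overline{\mathcal U'}\times K_{c_0}$, which is compact, is a continuous bijection onto the closed image $\MK_{\overline{\mathcal U'}}$, which is compact by Theorem~\ref{thm:main}, and $\MK_{\overline{\mathcal U'}}$ is Hausdorff. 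Hence that restriction is a homeomorphism, and so is its restriction to the open set $\mathcal U'\times K_{c_0}$. Renaming $\mathcal U'$ as $\mathcal U$ finishes the proof.
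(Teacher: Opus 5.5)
There is a genuine gap. Your construction depends on an \emph{equivariant} motion, that is, homeomorphisms $h^c:K_{c_0}\to K_c$ that conjugate $f_{c_0}$ to $f_c$ on the Fatou set too. Such a motion cannot exist when $c_0$ is the center of its hyperbolic component (for instance $c_0=0$), and the proposition covers that case. At the center, the critical point $0$ lies on the attracting cycle, so $f_{c_0}$ is not injective near that interior periodic point. For $c\ne c_0$ in $\mathcal U$, the cycle is attracting but not superattracting, so $f_c$ is injective near each of its points. A conjugacy maps interior to interior (by invariance of domain), so it maps interior periodic points to the attracting cycle. It would then carry the non-injectivity near $0$ to a neighbourhood of a cycle point of $f_c$, which is impossible. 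Concretely, for $c_0=0$ a conjugacy $\overline{\mathbb D}\to K_c$ would have to send $0$ to the attracting fixed point of $f_c$; there $z\mapsto z^2$ is two-to-one but $f_c$ is injective. Three things in your proposal break as a result. There are no B\"ottcher coordinates for $f_c$ with $c\ne c_0$ in step~2. The ``equivariant extension'' in your simpler route does not exist. Your justification of bijectivity (``because it conjugates $f_{c_0}$ to $f_c$ on \dots\ the Fatou components'') fails. Even away from centers the sketch needs repair. A fixed homeomorphism of the linear model cannot conjugate $w\mapsto\lambda(c_0)w$ to $w\mapsto\lambda(c)w$. No fundamental annulus can avoid the critical orbit, since that orbit accumulates on the cycle. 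You would have to mark the critical orbit and invoke structural stability on a disk avoiding the center.

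None of the Fatou-set dynamics is needed, and your last sentence already contains the key point. Keep the unmodified S\l{}odkowski extension $H$ of the motion of $J_{c_0}\cup\{\infty\}$ over a disk $\mathcal U$ inside the hyperbolic component (or inside $\C\setminus\M$). Each $H^c$ is a homeomorphism of $\widehat\C$ with $H^c(J_{c_0})=J_c$ and $H^c(\infty)=\infty$. So it maps the component of $\widehat\C\setminus J_{c_0}$ containing $\infty$ onto the component of $\widehat\C\setminus J_c$ containing $\infty$. That component is the basin of infinity $\widehat\C\setminus K_{c_0}$, which is connected with boundary $J_{c_0}$. Hence $H^c(K_{c_0})=K_c$, with no conjugacy required. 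This is exactly the paper's proof, and it works uniformly for all hyperbolic $c_0$, centers included. With that replacement, the rest of your argument is correct: the $\lambda$-lemma gives joint continuity of $T$. For $T^{-1}$ you shrink to $\mathcal U'\Subset\mathcal U$ and use that a continuous bijection from a compact space to a Hausdorff space is a homeomorphism. The paper instead proves continuity of $T^{-1}$ directly by a subsequence argument using compactness of $K_{c_0}$, which avoids shrinking the disk.
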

\begin{proof}
By $J$-stability \cite{MSS}, the Julia sets move holomorphically over
a sufficiently small disk $\mathcal U\ni c_0$. Adjoin the fixed
point $\infty$ and apply S\l{}odkowski's extension theorem
\cite{Slodkowski} (see also \cite[Theorem~3.1]{Roesch}).
The $\lambda$-lemma \cite[p.~193]{MSS} gives joint continuity
of the resulting holomorphic motion
\[
 h:\mathcal U\times\widehat\C\to\widehat\C,
 \qquad h^c(z):=h(c,z),
\]
normalized by $h^{c_0}=\operatorname{id}$, with each $h^c$ a sphere
homeomorphism fixing $\infty$ and satisfying $h^c(J_{c_0})=J_c$. 
It follows that $h^c(\C\setminus K_{c_0})=\C\setminus K_c$, and $ h^c(K_{c_0})=K_c$.
Thus $T:\mathcal U\times K_{c_0}\to\MK_{\mathcal U}$, defined by
$T(c,z)=(c,h^c(z))$, is a continuous bijection.
To verify continuity of $T^{-1}$, let $(c_j,w_j)\to(c,w)$ in
$\MK_{\mathcal U}$ and put $z_j=(h^{c_j})^{-1}(w_j)\in K_{c_0}$.
Every convergent subsequence $z_{j_k}\to z_*$ satisfies
\[
 h^c(z_*)=\lim_{k\to\infty}h^{c_{j_k}}(z_{j_k})=w,
\]
so $z_*=(h^c)^{-1}(w)$. Compactness of $K_{c_0}$ therefore gives
$z_j\to (h^c)^{-1}(w)$.
\end{proof}

\begin{proof}[Proof of Theorem~\ref{thm:stable-main}]
For $c_0\notin\M$, choose the disk $\mathcal{U}$ in
Proposition~\ref{prop:stable-product} inside $\C\setminus\M$.
Let $\pi_2:\mathcal U\times K_{c_0}\to K_{c_0}$ be the
second-coordinate projection. If $\mathfrak V\subset\MK_{\mathcal U}$
were a connected neighborhood of $(c_0,z)$, then
$\pi_2(T^{-1}(\mathfrak V))$ would be a connected subset of the
Cantor set $K_{c_0}$ and hence equal to $\{z\}$.
Thus $T^{-1}(\mathfrak V)\subset\mathcal U\times\{z\}$.
Since $K_{c_0}$ has no isolated points, this fiber has empty interior
in $\mathcal U\times K_{c_0}$, contradicting that
$T^{-1}(\mathfrak V)$ is a neighborhood.

For hyperbolic $c_0\in\M$, choose $\mathcal U$ inside its hyperbolic
component. Both $\mathcal U$ and $K_{c_0}$ are locally connected
\cite[Part II, Expos\'e X, Section 1]{DH}. Products of their open
connected neighborhoods form an open connected neighborhood basis
in $\mathcal U\times K_{c_0}$. In both cases,
$\MK_{\mathcal U}$ is open in $\MK$, so the homeomorphism $T$
transfers the asserted local properties to $\MK$.
\end{proof}
\section{Mixed puzzle tubes and shrinking}\label{sec:puzzles}
\subsection{Yoccoz puzzles}
We use the puzzle and parapuzzle constructions of
\cite[Sections~1--2]{Roesch}, recalling the notation needed below.
Fix a rational wake $\mathcal W_{p/q}$, where $(p,q)=1$ and $q\ge2$,
bounded by the parameter rays of angles $t^-$ and $t^+$ landing at its
root. Set
$
 \Theta=\{2^jt^+\bmod 1:0\le j<q\}.
$
For $c\in\mathcal W_{p/q}$, the dynamical rays with angles in $\Theta$
land at the repelling dividing fixed point $\alpha_c$
\cite[Theorem 1.3]{Roesch}.

Let $G_c$ be the Green function, normalized by
$G_c(z)-\log|z|\to0$ as $z\to\infty$, so that
$G_c\circ f_c=2G_c$. Let $\varphi_c$ be the normalized B\"ottcher coordinate with
\[
 \varphi_c\circ f_c=(\varphi_c)^2,\qquad
 \frac{\varphi_c(z)}{z} \to 1\quad(z\to\infty).
\]
For arbitrary $c$, $\varphi_c$ is conformal from
$\{G_c>G_c(0)\}$ onto $\{|w|>e^{G_c(0)}\}$.
If $c\in\M$, $\varphi_c$ is conformal from $\C\setminus K_c$ to $\C \setminus \overline{\mathbb D}$.
Write $R_c(\theta)$ for the ray of angle $\theta\in\R/\mathbb Z$ (argument $2\pi\theta$) in this
coordinate, continued along gradient lines wherever no critical point
is encountered. Following \cite[Definition 1.4]{Roesch}, define $ X^c=\{z:G_c(z)<1\}$, and
\[
 \begin{gathered}
 I_0^c=\partial X^c\cup
 \left(X^c\cap\bigcup_{\theta\in\Theta}\overline{R_c(\theta)}\right),
 \qquad I_m^c=f_c^{-m}(I_0^c).
 \end{gathered}
\]
The depth-$m$ puzzle pieces are the components of
$\{G_c<2^{-m}\}\setminus I_m^c$; denote their collection by $\mathscr P_m^c$.
Their boundaries consist of ray segments and arcs of
$\{G_c=2^{-m}\}$. If $c\in\M$ and
$f_c^k(0)\ne\alpha_c$ for every $k\ge0$, these pieces are Jordan
domains at every depth. For $z\in\{G_c<2^{-m}\}\setminus I_m^c$,
let $P_m^c(z)$ be the unique element of $\mathscr P_m^c$ containing $z$.
Whenever both sides are defined, we have
$
 P_{m+1}^c(z)\subset P_m^c(z).
$

\begin{figure}[htbp]
\centering
\IfFileExists{puzzle.png}{%
\includegraphics[width=.78\linewidth]{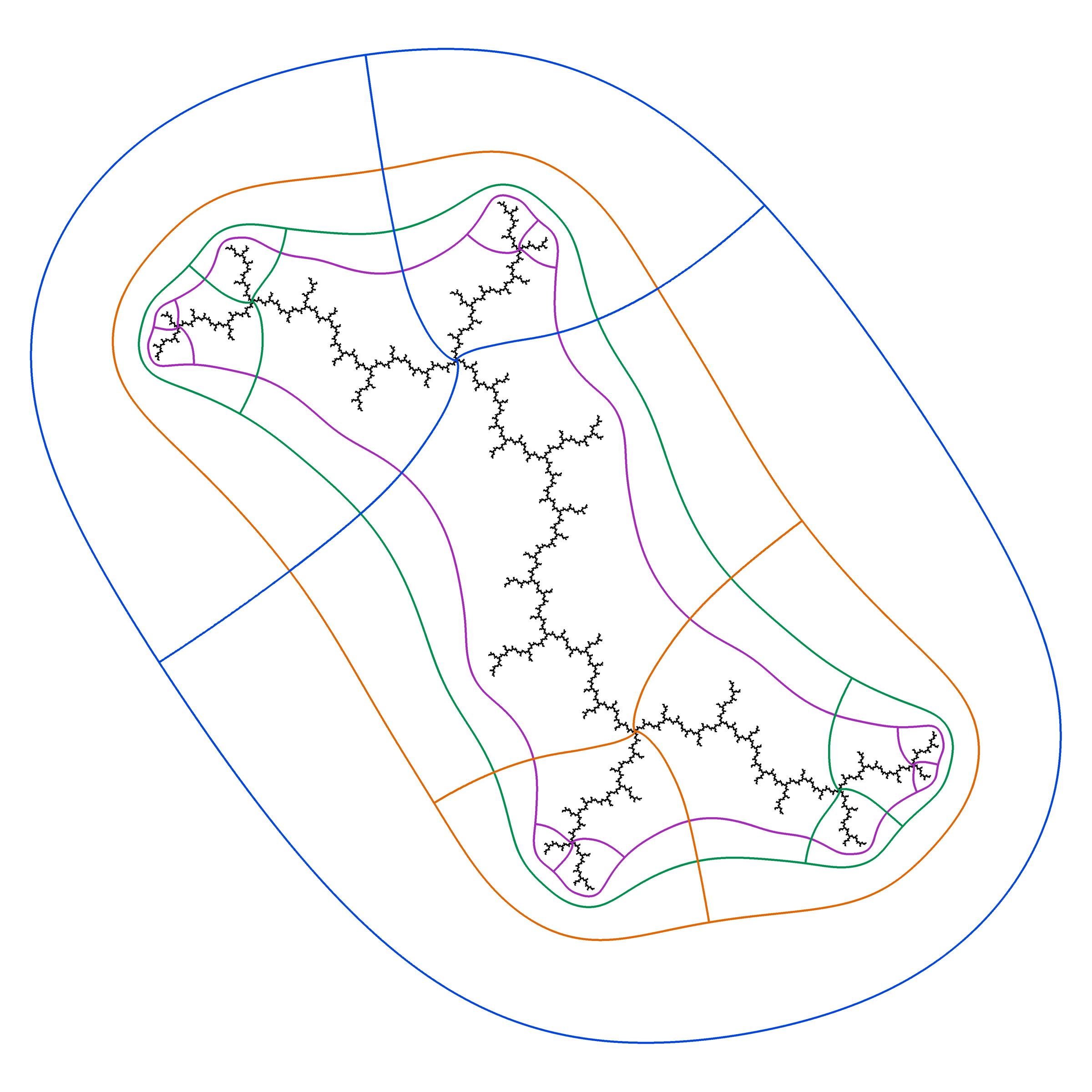}%
}{\fbox{\parbox[c][4cm][c]{.78\linewidth}{\centering
Figure file: \texttt{puzzle.png}}}}
\caption{Numerical illustration of the puzzle graphs for $f_i(z)=z^2+i$
at depths $0,1,2,3$.}
\label{fig:dynamical-puzzles}
\end{figure}

\subsection{Parapuzzles and holomorphic motions}
The parameter Green function is
$G_{\M}(c)=G_c(c)=2G_c(0)$ for $c\in\C$. Define
\[
 \Phi_{\M}:\C\setminus\M\to\C\setminus\overline{\mathbb D},
 \qquad \Phi_{\M}(c)=\varphi_c(c).
\]
The parameter external ray of angle $\theta$ is  
\[
 \mathcal R(\theta)=\Phi_{\M}^{-1}
 \{re^{2\pi i\theta}:r>1\}.
\]
For $n\ge0$, set $ \Theta_n=\{\theta\in\R/\mathbb Z:2^n\theta\in\Theta\} $, and
\[
 \mathcal X_n=\{c\in\mathcal W_{p/q}:G_{\M}(c)<2^{-n}\},\qquad
 \mathcal I_n=\partial\mathcal X_n\cup
 \left(\mathcal X_n\cap
 \bigcup_{\theta\in\Theta_n}\overline{\mathcal R(\theta)}\right).
\]
The components of $\mathcal X_n\setminus\mathcal I_n$ are bounded,
simply connected parapuzzle pieces
\cite[Definition 2.1 and Remark 2.4(b)]{Roesch}.

Fix $c_0\in\M\cap\mathcal W_{p/q}$ such that
$f_{c_0}^k(0)\ne\alpha_{c_0}$ for every $k\ge0$. Roesch's Lemma 2.3
gives
\[
 c\in\mathcal I_n\quad\Longleftrightarrow\quad c\in I_n^c
 \qquad(c\in\mathcal W_{p/q}).
\]
Since $c_0\in K_{c_0}$, the critical-orbit assumption implies
$c_0\notin\mathcal I_n$ for every $n$. Let
$\mathcal P_n=\mathcal P_n(c_0)$ be the parapuzzle piece containing
$c_0$. For $c\in\mathcal P_n$,
\[
 c\notin\bigcup_{j=0}^n I_j^c,
 \qquad 0\notin I_{n+1}^c,
 \qquad G_c(0)=\tfrac12G_{\M}(c)<2^{-(n+1)}.
\]

By \cite[Lemma~2.5 and its construction, pp.~123--124]{Roesch}
and S\l{}odkowski's extension theorem \cite{Slodkowski}
(see also \cite[Theorem~3.1]{Roesch}), we may choose the following
finite-depth motions. The extension theorem applies after identifying
the simply connected domain $\mathcal P_n$ conformally with a disk.
For every $n\ge0$ there is a holomorphic motion
\[
 h_n:\mathcal P_n\times\widehat\C\to\widehat\C,
 \qquad h_n^c(z):=h_n(c,z),
\]
normalized by $h_n^{c_0}=\operatorname{id}$, such that each
$h_n^c$ is a sphere homeomorphism fixing $\infty$ and
\[
 \begin{gathered}
 h_n^c(I_{n+1}^{c_0})=I_{n+1}^c,\qquad
 h_n^c(z)=\varphi_c^{-1}(\varphi_{c_0}(z))
 \quad\text{if }G_{c_0}(z)\ge2^{-(n+1)}.
 \end{gathered}
\]
The motions agree on common marked graph and exterior points.
The extensions on complementary domains need not be unique.
All motions used below have open parameter domains $\mathcal P_n$.

\begin{figure}[htbp]
\centering
\IfFileExists{para-puzzle.png}{%
\includegraphics[width=.98\linewidth]{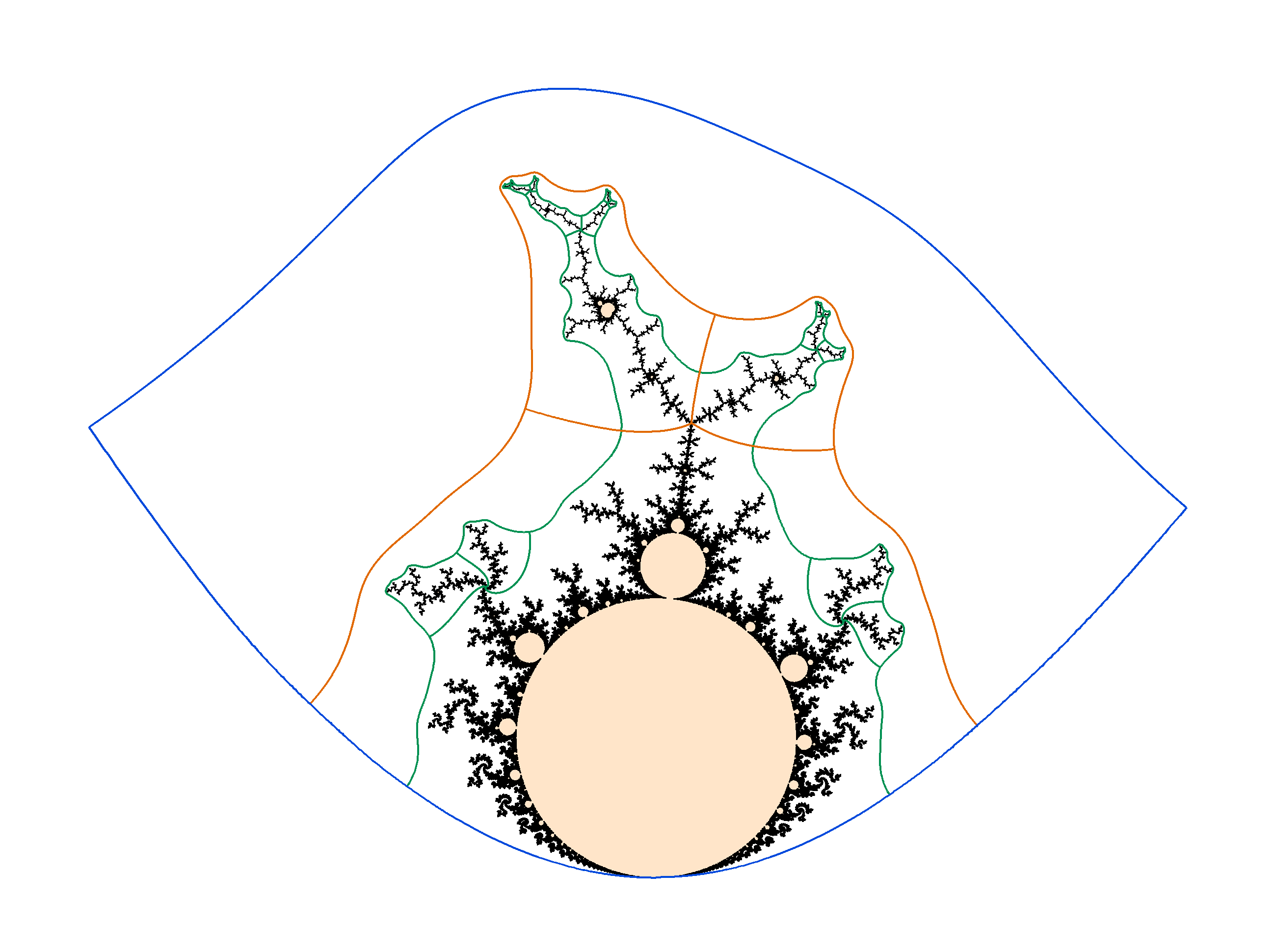}%
}{\fbox{\parbox[c][4cm][c]{.98\linewidth}{\centering
Figure file: \texttt{para-puzzle.png}}}}
\caption{Numerical illustration of the parapuzzle graphs in the $1/3$-wake
at depths $0,3,6$.}
\label{fig:parameter-puzzles}
\end{figure}

\subsection{Mixed tubes}
Fix $z_0\in K_{c_0}\setminus\bigcup_{j\ge0}I_j^{c_0}$ and set
$P_m:=P_m^{c_0}(z_0)$ for $m\ge0$. These pieces are uniquely
determined and satisfy $z_0\in P_{m+1}\subset P_m$.
For integers $n\ge m\ge0$, define
\begin{equation}\label{eq:two-index-tube}
 \begin{gathered}
 \mathfrak P_{n,m}(c_0,z_0)
 =\{(c,w):c\in\mathcal P_n,\ w\in h_n^c(P_m)\},
 \qquad
 \mathfrak P_n:=\mathfrak P_{n,n}.
 \end{gathered}
\end{equation}
Unless specified otherwise, closures of mixed tubes are taken in $\C^2$.

The planar nests satisfy $\mathcal P_{n+1}\subset\mathcal P_n$
and $P_{m+1}\subset P_m$ by the standard puzzle and parapuzzle
constructions \cite[Definitions~1.4 and~2.1]{Roesch}.

The compatibility of the marked motions
\cite[Lemma~2.5 and its construction]{Roesch} gives the following
identities for the corresponding puzzle pieces.
If $n'\ge n\ge m$ and $c\in\mathcal P_{n'}$, then
\begin{equation}\label{eq:motion-compatibility}
 h_{n'}^c(P_m)=h_n^c(P_m),\qquad
 h_{n'}^c(\overline{P_m})=h_n^c(\overline{P_m}).
\end{equation}
By \eqref{eq:motion-compatibility}, $h_m^c(P_m)=h_n^c(P_m)$ for $n\ge m$ and
$c\in\mathcal P_n$; hence it is independent of the sphere extension.
The impressions are
\[
 \begin{gathered}
 \mathcal I(c_0)=\bigcap_{n\ge0}\overline{\mathcal P_n},\qquad
 I_{c_0}(z_0)=\bigcap_{m\ge0}\overline{P_m},\qquad
 \mathfrak I(c_0,z_0)=\bigcap_{n\ge0}\overline{\mathfrak P_n}.
 \end{gathered}
\]

\begin{lemma}[Properties of mixed tubes]\label{lem:puzzle-structure}
We have the following properties:
\begin{enumerate}
\item[\textup{(i)}]  For $ 0\le m\le m'\le n $, we have
\[
 \mathfrak P_{n+1}\subset\mathfrak P_n,\qquad
 \mathfrak P_{n,m'}\subset\mathfrak P_{n,m}.
\]
\item[\textup{(ii)}] For every pair of integers $n\ge m\ge0$, the set
$\overline{\mathfrak P_{n,m}}$ is compact.
\item[\textup{(iii)}] For every $n\ge m\ge0$,
\[
 \pr(\overline{\mathfrak P_{n,m}})=\overline{\mathcal P_n},\qquad
 \overline{\mathfrak P_{n,m}}\cap(\{c_0\}\times\C)
 =\{c_0\}\times\overline{P_m}.
\]
\end{enumerate}
\end{lemma}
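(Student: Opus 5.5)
For (i), I will read the inclusions off the planar nests together with the compatibility identity \eqref{eq:motion-compatibility}. For the first inclusion, let $(c,w)\in\mathfrak P_{n+1}$, so $c\in\mathcal P_{n+1}\subset\mathcal P_n$ and $w\in h_{n+1}^c(P_{n+1})$. Since $P_{n+1}\subset P_n$, we get $w\in h_{n+1}^c(P_n)$. Applying \eqref{eq:motion-compatibility} with $n'=n+1$ and $m=n$ gives $h_{n+1}^c(P_n)=h_n^c(P_n)$, so $(c,w)\in\mathfrak P_n$. The second inclusion is immediate: $h_n^c$ is a bijection and $P_{m'}\subset P_m$, so $h_n^c(P_{m'})\subset h_n^c(P_m)$ for each fixed $c$.

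For (ii), I will show that $\mathfrak P_{n,m}$ is bounded. The parapuzzle piece $\mathcal P_n$ is bounded, so $\pr(\mathfrak P_{n,m})$ is bounded. For the fiber coordinate, I will use that $P_m\subset\{G_{c_0}<2^{-m}\}$. Moreover, $h_n^c$ maps $I_{n+1}^{c_0}\supset\partial\{G_{c_0}<2^{-m}\}$ onto $I_{n+1}^c$, and $h_n^c$ agrees with $\varphi_c^{-1}\circ\varphi_{c_0}$ on $\{G_{c_0}\ge2^{-(n+1)}\}$. A sphere homeomorphism fixing $\infty$ that carries the equipotential $\{G_{c_0}=2^{-m}\}$ to $\{G_c=2^{-m}\}$ sends the bounded complementary region to the bounded one. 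Hence
\[
 h_n^c(P_m)\subset\{G_c<2^{-m}\}\subset\{G_c<1\}.
\]
By Lemma~\ref{lem:escape}, the sets $\{G_c\le1\}$ are uniformly bounded for $|c|\le A$; this follows from the estimate $|f_c(w)|>2|w|$ for $|w|\ge R$, which gives $G_c(w)\ge\log|w|-C$ there. Therefore $\mathfrak P_{n,m}$ is bounded, and its closure in $\C^2$ is compact.

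For (iii), the inclusion $\pr(\overline{\mathfrak P_{n,m}})\subset\overline{\mathcal P_n}$ follows from continuity of $\pr$. Conversely, $\mathcal P_n\subset\pr(\mathfrak P_{n,m})$ because every fiber $h_n^c(P_m)$ is nonempty. The image of a compact set is compact, hence closed, so it contains $\overline{\mathcal P_n}$. For the slice at $c_0$, the inclusion $\supset$ is clear: $h_n^{c_0}=\operatorname{id}$ gives $\{c_0\}\times P_m\subset\mathfrak P_{n,m}$, and we take closures. For $\subset$, let $(c_j,w_j)\in\mathfrak P_{n,m}$ converge to $(c_0,w)$. Since $c_0\in\mathcal P_n$, which is open, we may assume every $c_j\in\mathcal P_n$. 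Put $z_j=(h_n^{c_j})^{-1}(w_j)\in P_m$ and pass to a subsequence $z_j\to z_*\in\overline{P_m}$. Joint continuity of the holomorphic motion (the $\lambda$-lemma, as in the proof of Proposition~\ref{prop:stable-product}) then gives
\[
 w=\lim_{j\to\infty} h_n^{c_j}(z_j)=h_n^{c_0}(z_*)=z_*\in\overline{P_m}.
\]

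The main subtlety I expect is this last step, which needs joint continuity of $h_n$ at $c_0$ on the whole sphere. I would justify it by the $\lambda$-lemma exactly as in Proposition~\ref{prop:stable-product}, noting that $\mathcal P_n$ is open and conformally a disk. A secondary point is the claim $h_n^c(P_m)\subset\{G_c<2^{-m}\}$, which relies on the level curve $\{G=2^{-m}\}$ being part of the marked graph $I_{n+1}$. This holds because $I_{n+1}^{c_0}=f_{c_0}^{-(n+1)}(I_0^{c_0})$ contains all the preimages $f_{c_0}^{-k}(\partial X^{c_0})$ for $k\le n+1$, in particular $\{G_{c_0}=2^{-m}\}$ for $m\le n$.
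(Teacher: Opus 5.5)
Your proof is correct and follows the paper's argument essentially step for step. The one variation is in (ii): you get $h_n^c(P_m)\subset\{G_c<2^{-m}\}$ from the B\"ottcher agreement $h_n^c=\varphi_c^{-1}\circ\varphi_{c_0}$ on $\{G_{c_0}\ge 2^{-(n+1)}\}$ plus a Jordan-curve argument, where the paper invokes the maximum principle.

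Two side justifications in (ii) are wrong, though neither breaks the proof. First, $I_{n+1}^{c_0}=f_{c_0}^{-(n+1)}(I_0^{c_0})$ lies in $\{G_{c_0}\le 2^{-(n+1)}\}$, so it does not contain $\{G_{c_0}=2^{-m}\}$ for $m\le n$; the B\"ottcher agreement alone carries that step. Second, the linear bound $|f_c(w)|>2|w|$ does not give $G_c(w)\ge\log|w|-C$. For that you need the quadratic bound $|f_c(w)|\ge|w|^2-A\ge|w|^2/2$, as the paper uses.
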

\begin{proof}
\textup{(i)} The inclusions follow from planar nesting and
\eqref{eq:motion-compatibility}.

\textup{(ii)} The parameter pieces lie in the bounded set
$\mathcal P_0$, and the maximum principle gives $G_c(w)\le2^{-m}\le1$
on each tube. For bounded $c$, the escape estimate
$|f_c(w)|\ge |w|^2/2$ for large $|w|$ gives, by iteration,
$G_c(w)\ge\log(|w|/2)$. Thus $G_c(w)\to\infty$ uniformly as
$|w|\to\infty$, so the tubes are bounded in $\C^2$ and their
closures are compact.

\textup{(iii)}  Since
$\pr(\mathfrak P_{n,m})=\mathcal P_n$, we have $ \pr(\overline{\mathfrak P_{n,m}}) \subset\overline{\mathcal P_n} $.
By (ii), $\overline{\mathfrak P_{n,m}}$ is compact.
It follows that $\overline{\mathcal P_n} \subset\pr(\overline{\mathfrak P_{n,m}})$.
For the slice identity, let
\[
 (c_j,h_n^{c_j}(w_j))\to(c_0,w),
 \qquad c_j\in\mathcal P_n,\quad w_j\in P_m.
\]
A subsequence satisfies $w_j\to w_*\in\overline{P_m}$. Joint
continuity and $h_n^{c_0}=\operatorname{id}$ imply
$w=h_n^{c_0}(w_*)=w_*$. Conversely, approximating each
$w\in\overline{P_m}$ by points of $P_m$ at the fixed parameter
$c_0$ gives the reverse inclusion.
\end{proof}

\begin{proof}[Proof of Theorem~\ref{thm:puzzle}\textup{(i)}]
Choose a rational wake for the non-degenerate pair $(c_0,z_0)$. The critical-orbit assumption gives
$c_0\notin\mathcal I_n$ for
all $n$, so the parameter nest is defined. Graph avoidance gives
$z_0\in P_n$ for all $n$.
Suppose $\mathcal I(c_0)=\{c_0\}$ and $I_{c_0}(z_0)=\{z_0\}$.
The projection identity in Lemma~\ref{lem:puzzle-structure}\textup{(iii)}
gives $\pr(\mathfrak I(c_0,z_0))\subset\mathcal I(c_0)=\{c_0\}$.
Intersecting its central slice identities therefore yields
\[
 \mathfrak I(c_0,z_0)
 =\mathfrak I(c_0,z_0)\cap(\{c_0\}\times\C)
 =\{c_0\}\times I_{c_0}(z_0)=\{(c_0,z_0)\}.
\]

Conversely, assume $\mathfrak I(c_0,z_0)=\{(c_0,z_0)\}$.
For $c\in\mathcal I(c_0)$, the sets
$L_n(c):=\overline{\mathfrak P_n}\cap(\{c\}\times\C)$
are nonempty, compact, and nested by
Lemma~\ref{lem:puzzle-structure}. Thus
\[
 \varnothing\ne\bigcap_nL_n(c)
 =\mathfrak I(c_0,z_0)\cap(\{c\}\times\C),
\]
which forces $c=c_0$. Hence $\mathcal I(c_0)=\{c_0\}$.
Intersecting the central slice identities gives
\[
 \{c_0\}\times I_{c_0}(z_0)
 =\mathfrak I(c_0,z_0)\cap(\{c_0\}\times\C)
 =\{(c_0,z_0)\},
\]
so $I_{c_0}(z_0)=\{z_0\}$.
\end{proof}

\subsection{Local connectedness from mixed shrinking}\label{sec:closed}
Retain the parapuzzle pieces $\mathcal P_m$, dynamical pieces $P_m$,
and motions $h_m$ defined above.

\begin{proposition}[Open mixed neighborhoods]\label{prop:open-mixed}
Suppose $\mathcal I(c_0)=\{c_0\}$ and
$z_0\in J_{c_0}\setminus\bigcup_{j\ge0}I_j^{c_0}$.
For each $m\ge0$ and all sufficiently large $n\ge m$,
$\mathfrak P_{n,m}\cap\KM$ is a connected relatively open
neighborhood of $(c_0,z_0)$ in $\KM$.
\end{proposition}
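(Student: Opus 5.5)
The plan has three parts: show the tube is open, show each fiber of the projection is connected and nonempty, and then run a modified version of Lemma~\ref{lem:monotone} over the base $\mathcal P_n\cap\M$.

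\emph{Openness.} The map $\Psi_n(c,z)=(c,h_n^c(z))$ is a continuous bijection of $\mathcal P_n\times\widehat\C$ onto itself. Its inverse is continuous by the compactness argument in the proof of Proposition~\ref{prop:stable-product}, here using compactness of $\widehat\C$. Hence $\mathfrak P_{n,m}=\Psi_n(\mathcal P_n\times P_m)$ is open in $\C^2$. It follows that $\mathfrak P_{n,m}\cap\KM$ is relatively open in $\KM$. It contains $(c_0,z_0)$ because $h_n^{c_0}=\mathrm{id}$ and $z_0\in P_m\cap K_{c_0}$.

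\emph{Fibers.} Fix $c\in\mathcal P_n\cap\M$ and use $m\le n+1$. Since $h_n^c(I_{n+1}^{c_0})=I_{n+1}^c$ and $h_n^c$ preserves the level sets of the Green function outside $\{G_{c_0}<2^{-(n+1)}\}$, the set $h_n^c(P_m)$ is a depth-$m$ puzzle piece of $f_c$. It is the piece with the same combinatorics as $P_m$. By \cite[Problem~1-1]{MilnorLC}, $h_n^c(P_m)\cap K_c$ is connected. It is nonempty, since every puzzle piece of a connected filled Julia set meets $K_c$: the bounding rays of the piece land on $K_c$, and a puzzle piece disjoint from $K_c$ would have to lie in a single sector of Böttcher coordinates, which the puzzle construction rules out. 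So the fibers of $p=\pr|_{X}$, where $X=\mathfrak P_{n,m}\cap\KM$, are nonempty and connected over $B=\mathcal P_n\cap\M$.

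\emph{Connectedness of the base.} Here I would use that $\mathcal P_n\cap\M$ is connected: a parapuzzle piece is cut out of the wake by rational parameter ray pairs and an equipotential, and each ray pair meets $\M$ only at its common landing point. The hypothesis $\mathcal I(c_0)=\{c_0\}$ and the choice of large $n$ enter here and in the next step. For large $n$ the closure $\overline{\mathcal P_n}$ lies in a small neighborhood of $c_0$. The only points of $\partial\mathcal P_n\cap\M$ are the finitely many landing points of the bounding parameter rays, and these stay away from $c_0$. I expect this to let me work over a compact base $\overline{\mathcal P_n}\cap\M$ from which finitely many boundary landing points have been removed.

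\emph{Separation argument (main obstacle).} Lemma~\ref{lem:monotone} needs compactness, which $X$ lacks. Suppose $X=A\sqcup D$ with $A,D$ nonempty and relatively open. Fiber connectedness gives $p(A)\cap p(D)=\varnothing$ and $p(A)\cup p(D)=B$. It then suffices to show that $p(A)$ and $p(D)$ are relatively closed in $B$.

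To do this, I would compare $X$ with the compact set
\[
\overline X'=\{(c,w):c\in\overline{\mathcal P_n}\cap\M,\ w\in h_n^c(\overline{P_m})\cap K_c\},
\]
which is compact by Lemma~\ref{lem:puzzle-structure}(ii). For $c$ in the base, the set $h_n^c(\overline{P_m})\cap K_c$ differs from the fiber $h_n^c(P_m)\cap K_c$ only by finitely many landing points on $\partial h_n^c(P_m)$. These are preimages of $\alpha_c$, and they move continuously with $c$ via the motion. Take $c_j\in p(A)$ with $c_j\to c\in B$, and points $(c_j,w_j)\in A$. After passing to a subsequence, $w_j\to w\in h_n^c(\overline{P_m})\cap K_c$.

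\begin{itemize}
\item If $w$ is interior to the tube, then $(c,w)\in X$. Since $D$ is open, $(c,w)\in D$ would force $(c_j,w_j)\in D$ for large $j$, a contradiction. So $(c,w)\in A$ and $c\in p(A)$.
\item If $w$ is a moving boundary landing point, this step is delicate. I would replace $w_j$ by nearby fiber points inside the connected set $h_n^{c_j}(P_m)\cap K_{c_j}$ that stay a definite distance from the boundary landing points. Connectedness of the fibers and the continuity of the landing points under the motion should make this possible.
\end{itemize}

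This limit step is where I expect the real work to lie. It is also where $z_0\in J_{c_0}$ and the largeness of $n$ should be used.
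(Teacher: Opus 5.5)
\textbf{There is a genuine gap.} Your openness, fiber and base steps are fine and match what the paper uses. However, the proposal never proves that $\mathfrak P_{n,m}\cap\KM$ is connected. Closedness of $p(A)$ is left open precisely when $w_j$ accumulates at a boundary landing point of $h_n^c(P_m)$.

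Your suggested remedy is to move $w_j$ to fiber points a definite distance from the landing points, using connectedness of the fibers and continuity of the landing points. That does not follow from those facts:
\begin{itemize}
\item The map $c\mapsto K_c$ is only upper semicontinuous. Its failure of lower semicontinuity is exactly what drives Theorem~\ref{thm:parabolic-main}.
\item With only the facts you invoke, a connected fiber $K_{c_j}\cap h_n^{c_j}(P_m)$ could shrink into a small neighbourhood of a landing point as $c_j\to c$.
\item For a depth-$0$ piece, whose boundary meets $K_c$ only at $\alpha_c$, there is not even a second landing point to force a lower bound on the fiber's diameter.
\end{itemize}
Correspondingly, you never say how $z_0\in J_{c_0}$, $\mathcal I(c_0)=\{c_0\}$ or the largeness of $n$ would actually be used.

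\textbf{The missing ingredient} is lower semicontinuity of Julia sets, in the form of persistent repelling periodic points. The paper argues as follows.
\begin{itemize}
\item Take a repelling periodic point $q_0\in J_{c_0}\cap P_m$. This uses $z_0\in J_{c_0}$ and density of repelling cycles.
\item Continue it holomorphically, by the implicit function theorem and joint continuity of the motion, to $q(c)\in J_c\cap h_m^c(P_m)$ on a disk about $c_0$.
\item Use $\overline{\mathcal P_n}\to\{c_0\}$ to fit $\mathcal P_n$ inside that disk for large $n$.
\item The graph $\{(c,q(c)):c\in\mathcal P_n\cap\M\}$ is then connected and meets every connected fiber, so the union is connected with no separation argument at all.
\end{itemize}

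\textbf{The same idea also repairs your route, locally.} At the limit parameter $c\in\mathcal P_n\cap\M$, the piece $h_n^c(P_m)$ is connected and meets both $K_c$ and its complement. Hence it meets $J_c$ and contains a repelling periodic point of $f_c$. The continuation of that point gives points of $\mathfrak P_{n,m}\cap\KM$ over $c_j$ converging to a point of the fiber over $c$. Since each fiber lies wholly in $A$ or wholly in $D$, this shows that $p(A)$ and $p(D)$ are relatively closed in $\mathcal P_n\cap\M$.

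This local version does not even use $\mathcal I(c_0)=\{c_0\}$. In this proposition the paper needs shrinking only so that a single continuation works over all of $\mathcal P_n$.
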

\begin{proof}
Choose a repelling periodic point $q_0\in J_{c_0}\cap P_m$.
Since $\overline{\mathcal P_n}$ shrinks to $c_0$, the implicit
function theorem and joint continuity of the motion give, for all
sufficiently large $n$, a holomorphic continuation $q$ on
$\mathcal P_n$ with $q(c)\in J_c\cap h_m^c(P_m)$.
By \eqref{eq:motion-compatibility},
\[
 \mathfrak P_{n,m}\cap\KM
 =\{(c,w):c\in\mathcal P_n\cap\M,\ w\in K_c\cap h_m^c(P_m)\}.
\]
The base $\mathcal P_n\cap\M$ is connected: parapuzzle pieces
cut $\M$ along finitely many rational ray pairs; see the connectedness
argument in \cite[proof of Proposition~3.5]{SchleicherFibers}.
Each fiber $K_c\cap h_m^c(P_m)$ is connected by planar puzzle
connectivity and meets the connected graph
$\{(c,q(c)):c\in\mathcal P_n\cap\M\}$. Their union is therefore
connected. Joint continuity of $(c,w)\mapsto(h_m^c)^{-1}(w)$
(which follows from joint continuity of $h_m$ and compactness of
$\widehat\C$) gives relative openness. Finally, the set contains
$(c_0,z_0)$.
\end{proof}

\begin{proof}[Proof of Theorem~\ref{thm:puzzle}\textup{(ii)}]
For the non-degenerate pair $(c_0,z_0)$, assume
$\mathcal I(c_0)=\{c_0\}$ and $I_{c_0}(z_0)=\{z_0\}$.
First, $z_0\in J_{c_0}$. Otherwise $z_0$ lies in a bounded Fatou
component $U_0$. Since each puzzle graph avoids $U_0$,
$U_0\subset P_m\ (m\ge0),\  U_0\subset I_{c_0}(z_0)=\{z_0\}$,
a contradiction.

Let $U$ be a neighborhood of $(c_0,z_0)$ in $\KM$. Choose
$\varepsilon>0$ such that
$\KM\cap(D(c_0,\varepsilon)\times D(z_0,\varepsilon))\subset U$.
Dynamical shrinking gives $m$ with
$\overline{P_m}\subset D(z_0,\varepsilon)$. By joint continuity
of $h_m$, compactness of $\overline{P_m}$, and parameter shrinking,
for all sufficiently large $n\ge m$ we have
$\mathcal P_n\subset D(c_0,\varepsilon)$ and
$h_m^c(\overline{P_m})\subset D(z_0,\varepsilon)$ for every
$c\in\mathcal P_n$. Proposition~\ref{prop:open-mixed} then gives
a connected relatively open neighborhood
$\mathfrak P_{n,m}\cap\KM\subset U$ of $(c_0,z_0)$.
This proves local connectedness of $\KM$ at $(c_0,z_0)$.
\end{proof}

\section{Non-local connectivity and Hausdorff dimension}
\label{sec:boundary}

\subsection{Non-local connectivity}\label{sec:parabolic}

For $b\in\M$ and $x,y\in\M\setminus\{b\}$, say that $b$
\emph{separates} $x$ and $y$ if they belong to distinct connected
components of $\M\setminus\{b\}$. Equivalently, every connected
set $C\subset\M$ with $x,y\in C$ contains $b$.

\begin{lemma}[Gate criterion]\label{lem:gate}
Let $(c,z_*)\in\KM$. Suppose that $a_n,b_n\in\M$ and
$z_n\in K_{a_n}$ satisfy
\[
 (a_n,z_n)\to(c,z_*),\qquad b_n\to c,
\]
and that $b_n$ separates $c$ and $a_n$ for every $n\ge1$.
If there exist $\rho>0$ and $N\ge1$ such that
\[
 K_{b_n}\cap D(z_*,\rho)=\varnothing\qquad(n\ge N),
\]
then $\KM$ is not locally connected at $(c,z_*)$.
\end{lemma}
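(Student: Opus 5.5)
We argue by contradiction. Suppose $\KM$ is locally connected at $(c,z_*)$. Take the relatively open neighborhood
\[
 U=\KM\cap\bigl(\C\times D(z_*,\rho)\bigr)
\]
of $(c,z_*)$ in $\KM$. By local connectedness there is a connected set $V$ with $(c,z_*)\in V\subset U$ that is a neighborhood of $(c,z_*)$ in $\KM$; relative openness of $V$ is not needed.

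Since $(a_n,z_n)\to(c,z_*)$ and $(a_n,z_n)\in\KM$, we have $(a_n,z_n)\in V$ for all large $n$. Consider the projection $\pr(V)\subset\M$. It is connected, being the continuous image of a connected set, and it contains both $c$ and $a_n$.

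The separation hypothesis requires $c,a_n\in\M\setminus\{b_n\}$, so $b_n\ne c$ and $b_n\ne a_n$. Because $b_n$ separates $c$ and $a_n$, the equivalent formulation recorded just before the lemma shows that every connected subset of $\M$ containing $c$ and $a_n$ contains $b_n$. Hence $b_n\in\pr(V)$: there is a point $w$ with $(b_n,w)\in V$. Then $w\in K_{b_n}$ because $V\subset\KM$, and $w\in D(z_*,\rho)$ because $V\subset U$.

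Now choose $n\ge N$ large enough that $(a_n,z_n)\in V$. The previous step gives $w\in K_{b_n}\cap D(z_*,\rho)$, while the hypothesis says $K_{b_n}\cap D(z_*,\rho)=\varnothing$. This contradiction shows that $\KM$ is not locally connected at $(c,z_*)$.

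The condition $b_n\to c$ is not actually used beyond making the hypothesis natural. In fact a single $n$ suffices once $V$ is fixed, so the only delicate point is to choose $U$ as a full vertical strip $\C\times D(z_*,\rho)$, with no restriction in the parameter direction. This choice guarantees that the projection argument lands in the disk $D(z_*,\rho)$ regardless of where $b_n$ lies.
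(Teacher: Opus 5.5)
Your proof is correct and is essentially the paper's argument: take a connected neighborhood inside $\KM\cap(\C\times D(z_*,\rho))$. Its projection is a connected subset of $\M$ containing $c$ and $a_n$ for large $n$, so separation forces $b_n$ into it, which produces a point of $K_{b_n}\cap D(z_*,\rho)$ and contradicts the hypothesis. Your remark that $b_n\to c$ is never used is also accurate for the paper's proof.
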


\begin{proof}
Suppose that $\KM$ is locally connected at $p_*=(c,z_*)$.
There is a relatively open connected set $\mathfrak N\subset\KM$ with
$p_*\in\mathfrak N\subset\KM\cap\bigl(\C\times D(z_*,\rho)\bigr)$.
For all sufficiently large $n$, $(a_n,z_n)\in\mathfrak N$.
The set $\pr(\mathfrak N)$ is connected and contains $c,a_n$;
hence it contains $b_n$. Consequently,
\[
 \varnothing\ne
 \{w:(b_n,w)\in\mathfrak N\}
 \subset K_{b_n}\cap D(z_*,\rho),
\]
contrary to the hypothesis.
\end{proof}

\begin{lemma}[Real-root gate]\label{lem:real-gate}
Let $b\in(-2,1/4)$ be a real parabolic root whose two characteristic
parameter rays are nonreal. Then
\[
 x,y\in\M\cap\R,\quad x<b<y
 \quad\Longrightarrow\quad
 b\text{ separates }x\text{ and }y\text{ in }\M.
\]
In particular, this holds for every real primitive root of period
greater than one.
\end{lemma}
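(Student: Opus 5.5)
The idea is to use the two characteristic parameter rays at $b$ to cut the plane. Let $\theta^\pm$ be the angles of the two parameter rays $\mathcal R(\theta^-)$ and $\mathcal R(\theta^+)$ landing at $b$. They are nonreal, so by the symmetry $\overline{\M}=\M$ (complex conjugation maps $\mathcal R(\theta)$ to $\mathcal R(-\theta)$) they form a conjugate pair: one lies in the upper half-plane and the other in the lower. Their union with $\{b\}$ is a Jordan arc $\Gamma$ through $\infty$ on the sphere. Hence $\C\setminus(\mathcal R(\theta^-)\cup\mathcal R(\theta^+)\cup\{b\})$ has exactly two components: the wake $\mathcal W_b$ and its complement.

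Next we check that this curve meets the real axis only at $b$. Each ray is disjoint from $\R$, since it lies in an open half-plane. The landing point $b$ is real. So $\Gamma\cap\R=\{b\}$.

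Consequently the two real half-lines $(-\infty,b)$ and $(b,\infty)$ are connected subsets of the complement of $\Gamma$, and each lies in a single complementary component. They must lie in different components. To see this, note that $\Gamma$ crosses $\R$ transversally at $b$ in the topological sense: near $b$ the arc $\Gamma$ passes from the upper half-plane to the lower. A small circle around $b$ meets $\Gamma$ in one upper and one lower point and meets $\R$ in two points on opposite sides of $b$, so these two points are separated on the circle by $\Gamma$. If $\Gamma$ is not locally tame enough for this, I would argue instead by a parity/Jordan-curve argument. A large loop starting on $(b,\infty)$, going through the upper half-plane and returning to $(-\infty,b)$, crosses $\Gamma$ only along $\mathcal R(\theta^+)$. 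I would choose the path to cross that ray exactly once, for instance along an equipotential $G_\M=\varepsilon$, which meets each ray once. Hence $x<b$ and $y>b$ lie in distinct components of $\C\setminus\Gamma$.

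With this, we conclude as follows. Any connected $C\subset\M$ containing $x$ and $y$ avoids the rays, since the rays lie outside $\M$. If $b\notin C$, then $C\subset\C\setminus\Gamma$ is connected and meets both components, which is impossible. So $b\in C$, which is exactly the separation statement.

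For the final clause, take a real primitive root $b$ of period $q>1$. I would note that a real parameter ray is one of $\mathcal R(0)$ or $\mathcal R(1/2)$. $\mathcal R(0)$ lands at $1/4$ and $\mathcal R(1/2)$ lands at $-2$. Since the primitive root $b\in(-2,1/4)$ is landed on by two rays of period $q>1$, neither ray has angle $0$ or $1/2$, so both are nonreal.

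The main obstacle is making the "opposite sides" step rigorous. I plan to use the equipotential crossing argument: the equipotential $\{G_\M=\varepsilon\}$ is a Jordan curve meeting $\Gamma$ in exactly two points, one on each ray. Combined with the fact that $(-\infty,b)\cap\M$ and $(b,\infty)\cap\M$ lie inside it, a Jordan-curve argument places them in different components.
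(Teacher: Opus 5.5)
Your proof is correct and follows essentially the same route as the paper. The two characteristic rays are complex conjugate and avoid $\R$, so together with $b$ and $\infty$ they form a Jordan curve $\Gamma$ meeting $\R$ only at $b$ and meeting $\M$ only at $b$; the half-lines $(-\infty,b)$ and $(b,\infty)$ lie in opposite components, so a connected subset of $\M$ containing $x$ and $y$ must contain $b$. The paper simply asserts the opposite-sides step, and your equipotential argument is a valid way to justify it: the upper arc of $\{G_\M=\varepsilon\}$ crosses $\mathcal R(\theta^+)$ exactly once, transversally because $\Phi_\M$ is a conformal chart there. Drop the small-circle version, since a ray may meet small circles around $b$ many times.
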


\begin{proof}
The characteristic rays $\mathcal R(t^-)$ and
$\mathcal R(t^+)$ landing at $b$ are complex conjugate
\cite[Theorem~3.1 and Corollary~5.4]{MilnorPortrait}. Since
$t^\pm\notin\{0,1/2\}$ and the parameter B\"ottcher coordinate
commutes with conjugation, neither open ray intersects $\R$.
Thus
$\Gamma=\mathcal R(t^-) \cup\{b,\infty\}\cup\mathcal R(t^+)$
is a Jordan curve in $\widehat\C$ satisfying
\[
 \Gamma\cap(\R\cup\{\infty\})=\{b,\infty\},
 \qquad \Gamma\cap\M=\{b\}.
\]
The intervals $(-\infty,b)$ and $(b,\infty)$ lie in opposite
components of $\widehat\C\setminus\Gamma$. A connected subset of
$\M\setminus\{b\}$ lies in one of these components, so it cannot
contain both $x$ and $y$.
\end{proof}

Fix a real primitive root $c_*\ne1/4$. For
$\theta\in\R/\mathbb Z$, let $L_\theta$ denote the compact
Lavaurs filled set of phase $\theta$ defined in
\cite[Theorem~3.1]{RojasYampolsky}. These sets satisfy
\begin{equation}\label{eq:lavaurs-inclusions}
 J_{c_*}\subsetneq L_\theta\subsetneq K_{c_*}.
\end{equation}

For nonempty compact sets $E_k\subset\C$, we use the Kuratowski limits
\[
 \begin{aligned}
 \LimInf_{k\to\infty}E_k
 &=\{z\in\C:\dist(z,E_k)\to0\},\\
 \LimSup_{k\to\infty}E_k
 &=\{z\in\C:\liminf_{k\to\infty}\dist(z,E_k)=0\}.
 \end{aligned}
\]
By \cite[Theorems~3.1 and 3.3]{RojasYampolsky}, there exist phases
$\theta_1,\theta_2\in[0,1)$ and strictly decreasing sequences of
real primitive roots $c_k^{(i)}\searrow c_*$, $i=1,2$, such that,
with $L_{c_*}^{(i)}=L_{\theta_i}$,
\begin{equation}\label{eq:lavaurs-separation}
 \partial L_{c_*}^{(1)}\setminus L_{c_*}^{(2)}\ne\varnothing
\end{equation}
and
\begin{equation}\label{eq:lavaurs-limits}
 \partial L_{c_*}^{(1)}\subset
 \LimInf_{k\to\infty}J_{c_k^{(1)}},
 \qquad
 \LimSup_{k\to\infty}K_{c_k^{(2)}}\subset L_{c_*}^{(2)}.
\end{equation}
Fix
\begin{equation}\label{eq:zstar}
 z_*\in\partial L_{c_*}^{(1)}\setminus L_{c_*}^{(2)}
 \subset K_{c_*}\setminus J_{c_*},
\end{equation}
where the inclusion follows from \eqref{eq:lavaurs-inclusions}.

\begin{proof}[Proof of Theorem~\ref{thm:parabolic-main}]
Set $c_*=-7/4$, the primitive period-three root
\cite[Section~3]{RojasYampolsky}, and choose the preceding sequences.
The lower-limit inclusion gives
$\dist(z_*,J_{c_k^{(1)}})\to0$. Choose $z_k\in J_{c_k^{(1)}}$
realizing this distance.
Since $L_{c_*}^{(2)}$ is compact, set
$d=\dist(z_*,L_{c_*}^{(2)})>0,\  \rho=d/2$.
If $K_{c_k^{(2)}}\cap D(z_*,\rho)\ne\varnothing$ for infinitely many $k$, there would be $k_j\to\infty$ and
$w_j\in K_{c_{k_j}^{(2)}}\cap D(z_*,\rho)$. After extraction,
$w_j\to w\in\overline{D(z_*,\rho)}$, whereas
\eqref{eq:lavaurs-limits} gives $w\in L_{c_*}^{(2)}$. Hence
$d\le|w-z_*|\le d/2$, a contradiction.

Recursively choose increasing indices $k_n,\ell_n$ such that
\[
 a_n=c_{k_n}^{(1)},\qquad b_n=c_{\ell_n}^{(2)},\qquad
 c_*<a_{n+1}<b_n<a_n.
\]
Then $a_n,b_n\to c_*$. Relabeling $z_{k_n}$ as $z_n$, we have
$z_n\in J_{a_n}$ and
\[
 (a_n,z_n)\to(c_*,z_*),\qquad
 K_{b_n}\cap D(z_*,\rho)=\varnothing
\]
for all sufficiently large $n$.
By Lemma~\ref{lem:real-gate}, $b_n$ separates $c_*$ and $a_n$.
Lemma~\ref{lem:gate} therefore gives non-local connectivity of
$\KM$ at $(c_*,z_*)$.

The Mandelbrot set is locally connected at primitive parabolic
parameters \cite{TanParabolic,SchleicherFibers}. Since $f_{c_*}$ is
geometrically finite, $J_{c_*}$ and $K_{c_*}$ are locally connected
\cite{DH,MilnorBook}. Finally, \eqref{eq:zstar} gives
$z_*\in K_{c_*}\setminus J_{c_*}$.
\end{proof}

\subsection{Hausdorff dimension of the boundary}\label{sec:dimension}

All Hausdorff dimensions in this section use the Euclidean metric.

For $c\in\C$, define
\[
 \operatorname{hypdim}(f_c)
 =\sup\{\mathrm{H.dim}\,X:X\text{ is a hyperbolic set for }f_c\}.
\]
Here $X\subset\C$ is hyperbolic if it is compact,
$f_c(X)\subset X$, and there exist $A>0$ and $\lambda>1$ such that
\[
 |(f_c^n)'(z)|\ge A\lambda^n
 \qquad(z\in X,\ n\ge0).
\]
We use the following results of Shishikura \cite{Shishikura}:
\begin{enumerate}
\item For every open $\mathcal V\subset\C$ with
$\mathcal V\cap\partial\M\ne\varnothing$,
\[
 \mathrm{H.dim}\,(\mathcal V\cap\partial\M)=2
 \qquad\text{(Theorem A).}
\]
\item For $s<2$, the set
$\mathcal V_s=\{c\in\C:\operatorname{hypdim}(f_c)>s\}$
is open, and $\mathcal V_s\cap\partial\M$ is dense in
$\partial\M$ (Property~(1.3) and the proof of Theorem B).
\item For every $c\in\C$,
\[
 \operatorname{hypdim}(f_c)\le\mathrm{H.dim}\,J_c
 \qquad\text{(Property~(1.1)).}
\]
\end{enumerate}

\begin{proof}[Proof of Theorem~\ref{thm:dimension-main}]
First,
$\MK_{\partial\M}\subset\partial\KM$.
Indeed, if $(c,z)\in\MK_{\partial\M}$, choose
$c_j\in\C\setminus\M$ with $c_j\to c$. Then
$(c_j,z)\notin\KM,\ (c_j,z)\to(c,z)\in\KM$,
and $\KM$ is closed.

Fix an open $\mathcal U\subset\C$ meeting $\partial\M$ and
$0<s<2$. Choose
\[
 c_s\in\mathcal U\cap\mathcal V_s\cap\partial\M,
 \qquad
 \overline{D(c_s,r)}\subset\mathcal U\cap\mathcal V_s
\]
for some $r>0$. Set
$\mathcal A_s=\partial\M\cap\overline{D(c_s,r)},\  \mathfrak E_s=\MK_{\mathcal A_s}$.
The set $\mathcal A_s$ is compact, and Theorem A gives
\[
 2=\mathrm{H.dim}\,(\partial\M\cap D(c_s,r))
 \le\mathrm{H.dim}\,\mathcal A_s\le2.
\]
For every $c\in\mathcal A_s$,
\[
 \mathrm{H.dim}\,K_c\ge\mathrm{H.dim}\,J_c
 \ge\operatorname{hypdim}(f_c)>s.
\]
By the standard slicing inequality \cite[Theorem~2.1]{HeraKeletiMathe},
\[
 \mathrm{H.dim}\,\bigl(\MK_{\partial\M}
 \cap(\mathcal U\times\C)\bigr)
 \ge\mathrm{H.dim}\,\mathfrak E_s\ge\mathrm{H.dim}\,\mathcal A_s+s=2+s.
\]
Letting $s\nearrow2$ gives
\[
 \mathrm{H.dim}\,\bigl(\MK_{\partial\M}
 \cap(\mathcal U\times\C)\bigr)=4.
\]
Taking $\mathcal U=\C$, we conclude that
\[
 4=\mathrm{H.dim}\,\MK_{\partial\M}
 \le\mathrm{H.dim}\,\partial\KM\le4.
\]
\end{proof}

%The dimension statement concerns parameter cylinders $\mathcal U\times\C$; it does not assert local dimension four at every boundary point.

\subsection*{Declaration of AI use}
During the preparation of this manuscript, the author used OpenAI's
ChatGPT and Codex to assist with language editing, LaTeX preparation,
notation checks, literature searches, numerical illustrations, and
discussions of mathematical arguments. The author is responsible for
the final text, the mathematical statements and proofs, and the accuracy
of the references.


\begin{thebibliography}{AKLS09}

\bibitem[AKLS09]{AKLS}
A. Avila, J. Kahn, M. Lyubich, and W. Shen,
\emph{Combinatorial rigidity for unicritical polynomials},
Ann. of Math. (2) \textbf{170} (2009), no.~2, 783--797.

\bibitem[BC12]{BuffCheritat}
X. Buff and A. Ch\'eritat,
\emph{Quadratic Julia sets with positive area},
Ann. of Math. (2) \textbf{176} (2012), no.~2, 673--746.

\bibitem[Dou94]{DouadyContinuity}
A. Douady,
\emph{L'ensemble de Julia d\'epend-il contin\^ument du polyn\^ome?},
in \emph{Aspects des syst\`emes dynamiques}, Journ\'ees math\'ematiques
X-UPS, \'Ecole Polytechnique, Palaiseau, 1994, pp.~41--91.

\bibitem[DH84/85]{DH}
A. Douady and J. H. Hubbard,
\emph{\'Etude dynamique des polyn\^omes complexes}, Parts I and II,
Publications Math\'ematiques d'Orsay 84-02 and 85-04,
Universit\'e de Paris-Sud, Orsay, 1984--1985.

\bibitem[DH85]{DHPolynomialLike}
A. Douady and J. H. Hubbard,
\emph{On the dynamics of polynomial-like mappings},
Ann. Sci. \'Ecole Norm. Sup. (4) \textbf{18} (1985), no.~2, 287--343.

\bibitem[DSZ97]{DSZ}
A. Douady, P. Sentenac, and M. Zinsmeister,
\emph{Implosion parabolique et dimension de Hausdorff},
C. R. Acad. Sci. Paris S\'er. I Math. \textbf{325} (1997), no.~7,
765--772.

\bibitem[HKM24]{HeraKeletiMathe}
K. H\'era, T. Keleti, and A. M\'ath\'e,
\emph{A Fubini-type theorem for Hausdorff dimension},
J. Anal. Math. \textbf{152} (2024), no.~2, 471--506,
\href{https://doi.org/10.1007/s11854-023-0302-3}{doi:10.1007/s11854-023-0302-3}.

\bibitem[Hub93]{HubbardYoccoz}
J. H. Hubbard,
\emph{Local connectivity of Julia sets and bifurcation loci: three
theorems of J.-C. Yoccoz},
in \emph{Topological Methods in Modern Mathematics} (Stony Brook, NY,
1991), Publish or Perish, Houston, TX, 1993, pp.~467--511.

\bibitem[Ino09]{InouDiscontinuity}
H. Inou,
\emph{Combinatorics and topology of straightening maps II: Discontinuity},
preprint, \href{https://arxiv.org/abs/0903.4289}{arXiv:0903.4289}.

\bibitem[IK12]{InouKiwi}
H. Inou and J. Kiwi,
\emph{Combinatorics and topology of straightening maps I:
compactness and bijectivity},
Adv. Math. \textbf{231} (2012), no.~5, 2666--2733,
\href{https://doi.org/10.1016/j.aim.2012.07.014}{doi:10.1016/j.aim.2012.07.014}.

\bibitem[Jon99]{Jonsson}
M. Jonsson,
\emph{Dynamics of polynomial skew products on $\C^2$},
Math. Ann. \textbf{314} (1999), no.~3, 403--447.

\bibitem[KL09]{KahnLyubichLC}
J. Kahn and M. Lyubich,
\emph{Local connectivity of Julia sets for unicritical polynomials},
Ann. of Math. (2) \textbf{170} (2009), no.~1, 413--426.

\bibitem[Lav89]{Lavaurs}
P. Lavaurs,
\emph{Syst\`emes dynamiques holomorphes: explosion de points paraboliques},
Ph.D. thesis, Universit\'e Paris-Sud, Orsay, 1989.

\bibitem[Lyu97]{Lyubich12}
M. Lyubich,
\emph{Dynamics of quadratic polynomials, I--II},
Acta Math. \textbf{178} (1997), no.~2, 185--297.

\bibitem[Lyu00]{Lyubich3}
M. Lyubich,
\emph{Dynamics of quadratic polynomials, III: parapuzzle and SBR measures},
Ast\'erisque \textbf{261} (2000), 173--200.

\bibitem[MSS83]{MSS}
R. Ma\~n\'e, P. Sad, and D. Sullivan,
\emph{On the dynamics of rational maps},
Ann. Sci. \'Ecole Norm. Sup. (4) \textbf{16} (1983), 193--217.

\bibitem[McM94]{McMullen}
C. T. McMullen,
\emph{Complex Dynamics and Renormalization},
Annals of Mathematics Studies, vol.~135,
Princeton University Press, Princeton, NJ, 1994.

\bibitem[Mil00a]{MilnorLC}
J. Milnor,
\emph{Local connectivity of Julia sets: expository lectures},
in \emph{The Mandelbrot Set, Theme and Variations} (Tan Lei, ed.),
London Math. Soc. Lecture Note Ser., vol.~274,
Cambridge University Press, Cambridge, 2000, pp.~67--116.

\bibitem[Mil00b]{MilnorPortrait}
J. Milnor,
\emph{Periodic orbits, external rays and the Mandelbrot set:
an expository account},
Ast\'erisque \textbf{261} (2000), 277--333.

\bibitem[Mil06]{MilnorBook}
J. Milnor,
\emph{Dynamics in One Complex Variable}, 3rd ed.,
Annals of Mathematics Studies, vol.~160,
Princeton University Press, Princeton, NJ, 2006.

\bibitem[Mil14]{MilnorBoundaries}
J. Milnor,
\emph{Hyperbolic component boundaries},
lecture slides, Gyeongju, August 23, 2014, revised version,
\href{https://www.math.stonybrook.edu/~jack/HCBkoreaPrint.pdf}{available online}.

\bibitem[Roe00]{Roesch}
P. Roesch,
\emph{Holomorphic motions and puzzles (following M. Shishikura)},
in \emph{The Mandelbrot Set, Theme and Variations} (Tan Lei, ed.),
London Math. Soc. Lecture Note Ser., vol.~274,
Cambridge University Press, Cambridge, 2000, pp.~117--132.

\bibitem[RY21]{RojasYampolsky}
C. Rojas and M. Yampolsky,
\emph{Real quadratic Julia sets can have arbitrarily high complexity},
Found. Comput. Math. \textbf{21} (2021), no.~1, 59--69.

\bibitem[Sch04]{SchleicherFibers}
D. Schleicher,
\emph{On fibers and local connectivity of Mandelbrot and Multibrot sets},
in \emph{Fractal Geometry and Applications: A Jubilee of Beno\^it Mandelbrot},
Part~1, Proc. Sympos. Pure Math., vol.~72, Amer. Math. Soc., Providence,
RI, 2004, pp.~477--517.

\bibitem[Ses99]{Sester}
O. Sester,
\emph{Hyperbolicit\'e des polyn\^omes fibr\'es},
Bull. Soc. Math. France \textbf{127} (1999), no.~3, 393--428.

\bibitem[Shi98]{Shishikura}
M. Shishikura,
\emph{The Hausdorff dimension of the boundary of the Mandelbrot set
and Julia sets},
Ann. of Math. (2) \textbf{147} (1998), no.~2, 225--267.

\bibitem[Slo91]{Slodkowski}
Z. S\l{}odkowski,
\emph{Holomorphic motions and polynomial hulls},
Proc. Amer. Math. Soc. \textbf{111} (1991), no.~2, 347--355.

\bibitem[Tan00]{TanParabolic}
L. Tan,
\emph{Local properties of the Mandelbrot set at parabolic points},
in \emph{The Mandelbrot Set, Theme and Variations} (Tan Lei, ed.),
London Math. Soc. Lecture Note Ser., vol.~274,
Cambridge University Press, Cambridge, 2000, pp.~133--160.

\end{thebibliography}
\end{document}